\title{To the Hilbert class field from the hypergeometric modular function }
\author{Atsuhira Nagano and Hironori Shiga }
\documentclass{article}
\usepackage{mathrsfs,bm,graphics,amsfonts,amsthm,amssymb,amsmath,amscd}
%\usepackage{mathrsfs,bm,graphics}
% Let $\mathscr{O}_K$ be the integer ring of $K$ and
 %$\mathscr{F}$ a sheaf on $\mathrm{Spec}\mathscr{O}_K$.
\pagestyle{plain}
\setlength{\oddsidemargin}{0 pt}
\setlength{\evensidemargin}{0 pt}
\setlength{\headsep}{0pt}
\setlength{\headheight}{0pt}
\setlength{\topmargin}{0pt}
\setlength{\baselineskip}{ 0pt}
\setlength{\textwidth}{160mm}
\setlength{\textheight}{240mm}
%\font\b=cmr10 scaled \magstep4
%\define\rtimes {226F}

\def\bigzerou{\smash{\lower1.7ex\hbox{\b 0}}}
\def\ds{\displaystyle}
\newtheorem{thm}{Theorem}[section]
\newtheorem{df}{Definition}[section]

\newtheorem{prop}{Proposition}[section]
\newtheorem{rem}{Remark}[section]

\newcommand{\Ima}{\rm Im\ }
\newcommand{\Mod}{\rm mod\ }
\def\comment#1{{ }}
\makeatletter
  
      \@addtoreset{equation}{section}
\begin{document}
\maketitle
%\tableofcontents
\setlength{\baselineskip}{15 pt}

\begin{abstract}
 In this article we make an explicit approach to the problem:
 ``For a given CM field $M$, construct its maximal unramified abelian extension $C(M)$ by  the adjunction of special values of certain modular functions''
  in some restricted cases with $[M:\bm{Q}]\geq 4$.
 We make our argument based on Shimura's main result on the complex multiplication theory of his article in 1967. 
 His main result treats CM fields embedded in a quaternion algebra $\bm{B}$ over a totally real number field $F$. 
 We determine the modular function which gives the canonical model for all $\bm{B}$'s coming from arithmetic triangle groups.
 That is our main theorem. 
 As its application, we make an explicit case-study for $\bm{B}$ corresponding to the arithmetic triangle group $\Delta (3,3,5)$.  
By using the modular function of K. Koike obtained in 2003, we show several examples of the Hilbert class fields as an 
application of our theorem to this triangle group. 
 \end{abstract}
% The corresponding 
% canonical model appears as a restriction of the 
% Appell's hypergeometric modular function on a 2-dimensional hyperball, that is a modular function for the family of  
% the pentagonal curves $w^5=z(z-1)(z-\lambda_1)(z-\lambda_2)$ with two 
% parameters $\lambda_1$ and $\lambda_2$,  to a hyperplane section.
%We use the result of K. Koike in 2003 to get an theta representation of the canonical model
% function.
% By using this expression,  we show 
% several examples of the Hilbert class fields of the CM fields those are embedded in the above $\bm{B}$.
 
%\tableofcontents
%\newpage
\section{Introduction}
In this article we make an explicit approach to the problem :
``For a given CM field $M$, construct its maximal unramified abelian extension $C(M)$ (i.e. the Hilbert
class field) by the adjunction of special values of certain modular functions.''

(a) The fundamental theorem of classical complex multiplication says that the Hilbert class
field of a given imaginary quadratic field $M = Q(\sqrt{-d})$ is given by the adjunction of the special 
value of the elliptic modular function 
$j(\omega), \omega = \begin{cases} \sqrt{-d} \ (d \equiv 1, 2 (\mod 4)) \cr (1+\sqrt{-d})/2\ (d \equiv 3 (\mod 4)),\end{cases}$ to $M$.
So we tried to obtain a counter part of this theorem in the case of CM fields of higher degree. That is also an important special case of the Hilbert 12th problem.
The theory of complex multiplication of abelian varieties was initiated by Shimura and Taniyama \cite{S-T} in 1950's. Shimura has worked for it about 10 years
(see \cite{SmrA}) . 
The main result on this theme is stated in \cite{SmrB}. 
We understand it as a theory of modular functions, in its refined sense, for the families of abelian varieties $A$ 
of {\rm QM}-type (namely a family of $A$ which extended algebra of endomorphisms ${\rm End}_0(A)$ contains a certain fixed quaternion 
algebra saying $\bm{B}$). If we have an embedding $\bm{B} \hookrightarrow (M_2(\bm{R}))^r$  for some $r$, 
it appears necessarily modular functions defined on $\bm{H}^r$, the $r$-times product of the upper half complex plane $\bm{H}$.
We make our argument based on Shimura's main result for the restricted case $r = 1$.
 Because, it is the only case which enables us to construct the exact Hilbert class field. 
 
On the other hand, for the family of abelian varieties (or Jacobi varieties) with a fixed generalized complex multiplication 
by a cyclotomic field, we often find its 
period domain becomes to be a hyperball. So, in this case we have a modular function on the hyperball. We expect it is connected with a certain type of hypergeometric differential equation. For the case of Appell's $F_1$, Terada and Deligne--Mostow determined the total list of modular functions on the 
hyperball \cite{Trd}, \cite{D-M}.
As an important back ground of the fundamental theorem of classical complex multiplication, we have the Schneider theorem:
``For $\tau \in \overline{\bm{Q}} \cap \bm{H}$, we have $j(\tau)\in \overline{\bm{Q}}$ if and only if $\tau$ is 
 imaginary quadratic.'' For the modular function of Terada and Deligne-Mostow, it holds the Schneider type theorem 
 \cite{Shg} \cite{S-W}. So we are expecting that there exists some kind of theory of complex multiplication for it. But, at this moment, 
 we do not have such a general theory for Appell's $F_1$ case.
Our present study can be considered as an intersection of the theory on the Appell's $F_1$ type modular functions and 
Shimura's complex multiplication theory.

(b) Our main results are summarized as:

(i) For the case the quaternion algebra $\bm{B}$ is coming from the arithmetic triangle unit group in the complex multiplication theory of Shimura, 
we determined the modular function that gives the generator of the Hilbert class field  $C(M)$ of a CM field $M$ which is embedded in $\bm{B}$. Our modular function is obtained as  the Schwarz inverse of the Gauss hypergeometric differential equation.

(ii) For one of the above cases, we described the modular function in terms of Riemann theta constants. It is obtained as a restriction of the 
Appell $F_1$ type modular function to a one-dimensional hyperplane section of the hyperball (namely 1-dimensional Shimura variety). 
The modular function is the one for the family of the Koike pentagonal curves $w^5 = z(z-1)(z-\lambda_1)(z-\lambda_2)$ with two parameters 
$\lambda_1, \lambda_2$.

(iii) By using the above theta constant representation we obtained several examples of Hilbert class fields by approximate calculation.

For it, our exposition proceeds under the following framework.

In Section 2 we recall the inverse of the Schwarz map for the Gauss hypergeometric differential equation and the classification of all arithmetic
triangle group by K. Takeuchi. In Section 3 we retell the main theorem of Shimura which we are concerned for the special case $r = 1$. Furthermore, we determine the modular function that gives the canonical model in the sense of Shimura for the quaternion algebra $\bm{B }$ coming from an arithmetic triangle unit group. That is our main theorem. In Section 4 we study the case $\bm{B}$ is coming from the triangle group $\Delta = \Delta (3, 3, 5)$. In this case $\Delta$ is he unit group
$\Gamma (\mathcal{O},1)$ of $\bm{B}$ in the sense of Shimura. We show an explicit theta representation of the modular function which is 
the canonical model based on the result of K. Koike \cite{Kkk} in 2003. In Section 5 by using our main theorem we show several examples of the Hilbert class fields of the {\rm CM} fields those are embedded in $\bm{B}$ of Section 4.

Also, our present study would suggest a new direction of the arithmetic application of the theory of $K3$ modular function via the works of S. Kondo \cite{Knd} 
and Artebani, Sarti, Taki \cite{A-S-T}. Because the 2 dimensional hyperball obtained as the period domain of the family of the Koike pentagonal curves can be considered at the same time as that of $K3$ surfaces with non-symplectic cyclic automorphism of order 5.
Our work is very much inspired by the pioneering study by J. Voight \cite{Vgt}.
%%%%%%%%%%%%%%%%%%% Section 1
\section{Hypergeometric modular functions and arithmetic triangle groups}
 \subsection{Hypergeometric modular function}
Let us consider the Gauss hypergeometric differential equation 
\begin{eqnarray} \label{eq: GHGE}
&&
E(a,b,c): \lambda (1-\lambda )f'' +(c-(a+b+1)\lambda )f'-abf=0
\end{eqnarray}
with real parameters $a,b,c$. 
It has regular singular points at $\lambda =0,1,\infty$. The exponents at singularities are given by 
the Riemann scheme 
\begin{eqnarray} \label{eq:GRscheme}
\left\{ \begin{matrix}0&1&\infty \cr 0&0&a\cr 1-c&c-a-b&b\end{matrix}
\right\}.
\end{eqnarray}
We always assume the condition
\begin{eqnarray} \label{eq:inversecd}
&&
(\ast )\qquad \begin{cases}|1-c| +|c-a-b|+|a-b| <1,
\cr \cr
p=1/|1-c| , q=1/|c-a-b|, r=1/|a-b| \in {\bf N}\cup \{ \infty\}.
\end{cases}
\end{eqnarray}

Set  $\eta_1(\lambda) ,\eta_2(\lambda)$ be a basis of the space of solutions of (\ref{eq: GHGE}).
The ratio $\eta_2/\eta_1$ determines a single valued analytic function on the lower complex half plane $\bm{H}_{-}$.
According to the condition ($\ast$ ), by choosing an adequate basis the image can be considered to be a 
hyperbolic triangle $\nabla (p,q,r)$ on the upper half plane $\bm{H}$ with angles $\ds{ \frac{\pi}{p}, \frac{\pi}{q}, \frac{\pi}{r}}$.

The multivalued analytic map $\mathcal{F}$ obtained as the analytic continuation of the map $\eta_2/\eta_1$ on $\bm{H}_{-}$ 
is called the Schwarz map of (\ref{eq: GHGE}). The image of $\mathcal{F}$ is obtained by the iteration of reflection procedure 
of $\nabla = \nabla (p,q,r)$. Due to the condition ($\ast$ ), the reflection images of $\nabla$ makes a tessellation of 
$\bm{H}$. In other words, the monodromy group of  (\ref{eq: GHGE}) is given as the totality of the iteration of the 
reflection procedures of even times. We call this group a triangle group $\Delta =\Delta (p,q,r)$. Note that the fundamental region 
of this monodromy group $\Delta$ is composed of $\nabla$ and its reflection $\nabla'$ with respect to one side of $\nabla$.

The inverse map $\phi (z)$ of $\mathcal{F}$ becomes to be a modular function defined on $\bm{H}$ with respect to 
the triangle group $\Delta (p,q,r)$. Let $z_1,z_2,z_3$ be the vertices of $\nabla$ obtained as $\mathcal{F}(0),
\mathcal{F}(1),\mathcal{F}(\infty )$ , respectively. 

\begin{df}
Let  (\ref{eq: GHGE}) be a Gauss hypergeometric differential equation with the condition  ($\ast$). 
We call the inverse $\phi (z)$ of the above Schwarz map a normalized {\bf{ hypergeometric modular function}}.
Note that its values at the vertices are fixed in the form : $\phi (z_1)=0,\phi (z_2)=1,\phi (z_3)=\infty $.
\end{df}

For the case $(p,q,r)=(\infty , \infty ,\infty )$ we have the angular parameters $(a,b,c)=(\frac{1}{2}, \frac{1}{2},1)$. 
By looking at the fundamental domain and its reflections, we know  the monodromy group is given by 
  $\langle \begin{pmatrix} 1&2\cr 0&1 \end{pmatrix} ,\begin{pmatrix} 1&0\cr 2&1 \end{pmatrix} \rangle ={\Gamma} (2)$.
Hence, the normalized hypergeometric modular function for $\Delta (\infty , \infty ,\infty )$ is the
Legendre function $\lambda (z)$ with $\lambda (i \infty )=0, \lambda (0)=1,\lambda (1)=\infty$.

For the case $(p,q,r)=(2,3,\infty)$, it appears the angular parameters $(a,b,c)=(\frac{1}{12}, \frac{5}{12},1)$.  
By comparing the corresponding fundamental region with that of $\Delta (\infty , \infty ,\infty )$, 
we know the monodromy group coincides with $PSL_2(\bm{Z})=\Gamma $.
So we obtain the elliptic modular function  $\ds{J(z) = \frac{4}{27}\frac{(1-\lambda +\lambda^2)^3}{\lambda^2(1-\lambda)^2}}$
 as the normalized hypergeometric modular function for $\Delta (2 , 3 ,\infty )$. It holds 
 $J(\frac{1-\sqrt{-3}}{2})=0, J(i)=1, J(i\infty )=\infty$.

\subsection{Arithmetic triangle groups and quaternion algebras}
Let $F$ be a totally real number field. Let $a,b$ be elements of $F$ satisfying the condition
\begin{eqnarray*}
\rm{(Cd)} :
\begin{cases}
a,b\in F: a<0,b>0 \cr \cr
 \ \mbox{all\ their\ conjugates\ other\ than} \ a,b\ \mbox{are\ negative.}
 \end{cases}
\end{eqnarray*}
By them we set a quaternion algebra
$\bm{B} =F+F\alpha +F\beta +F \alpha \beta \  ,\beta \alpha =-\alpha \beta ,
\alpha ^2=a, \beta^2=b$. 
We denote it by $\left( \frac{a,b}{F} \right)$.
For
$x=x_1+x_2\alpha +x_3\beta +x_4\alpha \beta \in \bm{B}$, we define its conjugate
$\overline{x}= x_1-x_2\alpha -x_3\beta -x_4\alpha \beta$. 

The reduced trace and the reduced norm of $x\in \bm{B}$ are 
defined by ${\rm{Trd}} (x) =x+\overline{x}, {\rm{Nrd}} (x)=x\overline{x}$, respectively.

Setting
\begin{eqnarray*}
&&
M_1=\begin{pmatrix} 1&0\cr 0&1\end{pmatrix},
M_x=\begin{pmatrix} 0&a\cr 1&0\end{pmatrix},
M_y=\begin{pmatrix} \sqrt{b}&0\cr 0&-\sqrt{b}\end{pmatrix},
M_z=M_xM_y,
\end{eqnarray*}
we have an isomorphism
\[
\bm{B} \cong F\ M_1+F\ M_x+F\ M_y+F\ M_z
\]
of non commutative $F$-algebras.
In this way $\bm{B}$ is embedded in $M_2(\bm{R})$.
We often identify $\bm{B}$ with the subalgebra $F\ M_1+F\ M_x+F\ M_y+F\ M_z \subset M_2(\bm{R})$ 
by this embedding.
So we have
${\rm{Trd}} (x) ={\rm{Tr}} (x) , {\rm{Nrd}} (x)=\det (x)$.
 
Set $\bm{B}^+=\{ x\in \bm{B} : \det (x)\gg 0\}$, where $x \gg 0$ means $x$ is totally positive as an element of $F$.
Due to the condition (Cd), $\bm{B}^+$ becomes to be a Fuchsian group acting on $\bm{H}$.
Let $\mathcal{O}_F$ denote the ring of integers of $F$. 
We denote the group of units in $\mathcal{O}_F$ by $E_1$, and set 
$E_0=\{ g\in E_1: \det (g) \gg 0\}$.
If it holds  ${\rm{Tr}} (\gamma)\in \mathcal{O}_F$ and $\det (\gamma)\in \mathcal{O}_F$ 
for an element $\gamma \in \bm{B}$, we say it is an integral element of $\bm{B}$.

If an  $\mathcal{O}_F$-module of rank 4 is a subring of $\bm{B}$, we say it is an order of $\bm{B}$. 
Note that any element of an order is an integral element. 
We fix one maximal order of $\bm{B}$ and denote it by $\mathcal{O}=\mathcal{O}_{\bm{B}}$.
\begin{rem}
Due to Takeuchi \cite{Tku2} (Prop. 3, p.206), we have the following.
For the quaternion algebra $\bm{B}$ coming from an arithmetic triangle group, $\bm{B}$ has unique 
maximal order up to conjugation by an element of $F$.
\end{rem}
\begin{df}
According to Takeuchi \cite{Tku2} (left) and Shimura \cite{SmrB} (right) we use the following notation: 
\begin{eqnarray} \label{eq:dfGammagrps}
&&
\Gamma^{(1)} (\bm{B}, \mathcal{O})= \Gamma (\mathcal{O})=\{ \gamma \in \mathcal{O}: \det (\gamma)=1 \},
%\\&&
%\Gamma (\mathcal{O})= \{ \gamma \in \mathcal{O}: \det (\gamma)\in E_1\},
\\&&
\Gamma^{+} (\bm{B}, \mathcal{O})=\Gamma (\mathcal{O},1)=\{ \gamma \in \mathcal{O}: \det (\gamma)\in E_0\},
\\&&
\Gamma^{(\ast)} (\bm{B}, \mathcal{O})=\Gamma^{\ast}   (\mathcal{O}) =\{ \gamma \in \bm{B}^+: \gamma \mathcal{O} =\mathcal{O} \gamma\}.
\end{eqnarray}
\end{df}

These groups are called conventionally as the norm 1 group, the unit group and the normalizer group respectively.
Note that the norm 1 group is a subgroup of the unit group of finite index, and 
the unit  group is a subgroup of the normalizer group of finite index. 
\begin{df}
If a triangle group $\Delta$ is commensurable with the unit group of a certain quaternion algebra 
$\ds{\bm{B} =\left( \frac{a,b}{F} \right)}$ up to conjugation in $SL_2(\bm{R})$, we say 
$\Delta$ is {\bf an arithmetic triangle group}.
\end{df}

K. Takeuchi  (see \cite{Tku1}, \cite{Tku2}) 
has determined all arithmetic triangle groups. There are 85 in total, and they are classified into 19 classes 
of commensurable families (see  Table App.1 in Appendix).

Moreover Takeuchi showed that 
the quaternion algebra $\ds{\bm{B} =\left( \frac{a,b}{F} \right)}$ corresponding to 
$\Delta =\Delta (e_1,e_2,e_3)$ $\quad (e_1\leq e_2\leq e_3)$ is given by
\begin{eqnarray*}
&&
a=t_2^2(t_2^2-4),
\\&&
b=t_2^2t_3^2(t_1^2+t_2^2+t_3^2+t_1t_2t_3-4),
\\&&
F=\bm{Q}(t_1^2,t_2^2,t_3^2,t_1t_2t_3), \mbox{where}\ t_i=2\cos \frac{\pi}{e_i}.
\end{eqnarray*}
\begin{rem} \label{rem2.1}
The class number $h(F)$ of $F$ is always equal to 1.
\end{rem}
\begin{rem} \label{excluderem}
We are interested in the unit group $\Gamma^{+}(\bm{B},\mathcal{O})$. In the following Table 2.1 there are two cases, 
(2) and (12), where it is not a triangle group but is a quadrangle group. We exclude these cases in the following argument.
\end{rem}

{\small
{\center
~
\vskip 3mm
\begin{tabular}{ccccccccc} \hline \hline
 &$F$& Disc.&$\Gamma^{(1)} (\bm{B}, \mathcal{O})$&$\Gamma^{+} (\bm{B}, \mathcal{O})$&$\Gamma^{(\ast)} (\bm{B}, \mathcal{O})$&$(a,b)$\\ \hline
(1)&$\bm{Q}$&$(1)$&$(2,3,\infty)$&$(2,3,\infty)$&$(2,3,\infty)$&$(-3,4)$  \\ 
(2)&$\bm{Q}$&$(2)(3)$&$(0;2,2,3,3)$&$(0;2,2,3,3)$&$(2,4,6)$&$(-4,6)$  \\
(3)&$\bm{Q}(\sqrt{2})$&$\frak{p}_2$&$(3,3,4)$&$(3,3,4)$&$(2,3,8)$&$(-3,4\cos^2(\frac{\pi}{8})(-3+4 \cos^2 (\frac{\pi}{8})))$  \\
(4)&$\bm{Q}(\sqrt{3})$&$\frak{p}_2$&$(3,3,6)$&$(2,3,12)$&$(2,3,12)$&$(-3,1+\sqrt{3})$  \\
(5)&$\bm{Q}(\sqrt{3})$&$\frak{p}_3$&$(0;2,2,2,6)$&$(2,4,12)$&$(2,4,12)$&$(-4,6+4\sqrt{3})$  \\
(6)&$\bm{Q}(\sqrt{5})$&$\frak{p}_2$&$(2,5,5)$&$(2,5,5)$&$(2,4,5)$&$(-4,1+\sqrt{5})$  \\
(7)&$\bm{Q}(\sqrt{5})$&$\frak{p}_3$&$(3,5,5)$&$(3,5,5)$&$(2,5,6)$&$(-\frac{1}{2}(5+\sqrt{5}),3(2+\sqrt{5}))$  \\
(8)&$\bm{Q}(\sqrt{5})$&$\frak{p}_5$&$(3,3,5)$&$(3,3,5)$&$(2,3,10)$&$(-3,\sqrt{5})$  \\
(9)&$\bm{Q}(\sqrt{6})$&$\frak{p}_2$&$(0;2,3,3,3)$&$(3,4,6)$&$(3,4,6)$&$(-4,6(2+\sqrt{6}))$  \\
(10)&$\bm{Q}(\cos (\frac{\pi}{7}))$&$(1)$&$(2,3,7)$&$(2,3,7)$&$(2,3,7)$&$(-3,4\cos^2(\frac{\pi}{7})(-3+4\cos^2(\frac{\pi}{7})))$  \\
(11)&$\bm{Q}(\cos (\frac{\pi}{9}))$&$(1)$&$(2,3,9)$&$(2,3,9)$&$(2,3,9)$&$(-3,4\cos^2(\frac{\pi}{9})(-3+4\cos^2(\frac{\pi}{9})))$  \\
(12)&$\bm{Q}(\cos (\frac{\pi}{9}))$&$\frak{p}_2\frak{p}_3$&$(0;2,2,9,9)$&$(0;2,2,9,9)$&$(2,4,18)$&$(-4,8\cos^2(\frac{\pi}{18})(-2+4\cos^2(\frac{\pi}{18})))$  \\
(13)&$\bm{Q}(\cos (\frac{\pi}{8}))$&$\frak{p}_2$&$(3,3,8)$&$(3,3,8)$&$(2,3,16)$&$(-3,4\cos^2(\frac{\pi}{16})(-3+4\cos^2(\frac{\pi}{16})))$  \\
(14)&$\bm{Q}(\cos (\frac{\pi}{10}))$&$\frak{p}_2$&$(5,5,10)$&$(2,5,20)$&$(2,5,20)$&\hspace{-3mm}{\scriptsize$\begin{pmatrix} -\frac{1}{2}(5+\sqrt{5}),\cr (3+\sqrt{5})\cos^2 (\frac{\pi}{20}(-5+\sqrt{5}+8\cos^2 (\frac{\pi}{20})))\end{pmatrix}$ } \\
(15)&$\bm{Q}(\cos (\frac{\pi}{12}))$&$\frak{p}_2$&$(3,3,12)$&$(2,3,24)$&$(2,3,24)$&{\footnotesize$\begin{pmatrix} -3,\cr 4\cos^2 (\frac{\pi}{24}(-3+4\cos^2 (\frac{\pi}{24})))\end{pmatrix}$}  \\
(16)&$\bm{Q}(\cos (\frac{\pi}{15}))$&$\frak{p}_3$&$(5,5,15)$&$(2,5,30)$&$(2,5,30)$&\hspace{-3mm}{\scriptsize$\begin{pmatrix} -\frac{1}{2}(5+\sqrt{5}),\cr (3+\sqrt{5})\cos^2 (\frac{\pi}{30}(-5+\sqrt{5}+8\cos^2 (\frac{\pi}{30})))\end{pmatrix}$ } \\
(17)&$\bm{Q}(\cos (\frac{\pi}{15}))$&$\frak{p}_5$&$(3,3,15)$&$(2,3,30)$&$(2,3,30)$&$(-3, 4\cos^2(\frac{\pi}{30})(-3+4\cos^2(\frac{\pi}{30})))$ \\
(18)&$\bm{Q}(\sqrt{2},\sqrt{5})$&$\frak{p}_2$&$(4,5,5)$&$(4,5,5)$&$(2,5,8)$& \hspace{-3mm} {\scriptsize$ \begin{pmatrix} -\frac{1}{2}(5+\sqrt{5}),\cr \frac{1}{8} (3+\sqrt{5})\cos^2 (\frac{\pi}{8}(-5+\sqrt{5}+\cos^2 (\frac{\pi}{8})))\end{pmatrix}$ } \\
(19)&$\bm{Q}(\cos (\frac{\pi}{11}))$&$(1)$&$(2,3,11)$&$(2,3,11)$&$(2,3,11)$&$(-3, 4\cos^2(\frac{\pi}{11})(-3+4\cos^2(\frac{\pi}{11})))$  \\
 \hline
\hline
\end{tabular}
\par \vskip 3mm
{\centerline{Table 2.1:List of norm 1 groups, unit groups and normalizer groups according to
K. Takeuchi \cite{Tku2} }
}
}
}
\vskip 5mm
%\centerline{Table 2.1: Takeuchi List of $\bm{B}$ subgroups}
%}
%\par \vskip 4mm
%
%\vskip 5mm
%{\center
%\scalebox{.9}
%{ \includegraphics{FR3662233} }
%%\qquad
%%\scalebox{.7}
%%{ \includegraphics{ShimuraNFD15.eps} }
%
%
% Fig.2.1.  : Figures of $\Delta (3,6,6)$ and $(0:2,2,3,3)$ \\
% }
% 
%\vskip 4mm
%%%%%%%%%%%%%%%%%%%%%%%%%%%%%%%
\section{Description of Shimura's complex multiplication theorem for triangle cases}
Let $F$ be a totally real number field, and let $M$ be a CM field over $F$. 
Namely, $M$ is a totally imaginary field that is a quadratic extension of $F$.
Shimura made a long-ranged research on the complex multiplication theory of such CM fields. 
We can find his main result in \cite{SmrB}. He showed four main theorems there. 
Especially we are concerned with the first main theorem for a special case $r=1$. 
We can restate this specialized case as the following.

Let $F$ be a totally real number field, and let $\ds{\bm{B} =\left( \frac{a,b}{F} \right)}$  be a quaternion algebra 
satisfying the condition (Cd). In this case $\bm{B}$ satisfies the condition $r=1$ with the terminology of Shimura. 

\paragraph{[Shimura's  Main theorem I for the case $r=1$]}(\cite{SmrB} Theorem 3.2, p.73).
Take above mentioned $F,M$ and $\bm{B}$.
Assume an embedding $f: M \hookrightarrow \bm{B}$ satisfying $f(\mathcal{O}_M)\subset \mathcal{O}_{\bm{B}}$, 
where $\mathcal{O}_M$ stands for the ring of integers of $M$.
Then there are a nonsingular complex variety $V$ and a modular map $\psi (z)$ from $\bm{H}$ to $V$
with respect to $\Gamma (\mathcal{O},1)$
satisfying the following  condition:

(1) $\psi (z)$ induces a biholomorphic correspondence  $\bm{H}/\Gamma ({\mathcal O},1)\cong V$,

(2) $V$ is defined over $C(F)$ (the Hilbert class field of $F$), 

(3) for a regular fixed point (that is explained below)  $z_0\in \bm{H}$ of $M$, it holds
 $M(\psi (z_0))\cdot C(F) =C(M)$, where $C(M)$ stands for the Hilbert class field of $M$.
\par \vskip 3mm

\paragraph{[Regular fixed point  $z_0\in \bm{H}$ of $M$].} 
Recall the embedding  $f: M\hookrightarrow \bm{B}$. We can put $M=F(\alpha), \alpha \in \mathcal{O}_M$.
The linear transformation $g=f(\alpha)$ has unique fixed point in $\bm{H}$. 
That gives our regular fixed point.

\begin{df}
The above pair $(\psi ,V)$ is called {\bf a canonical model} for $\bm{H}/\Gamma ({\mathcal O},1)$.
\end{df}
\begin{rem} \label{ShimuraUniquenessThm}
The canonical model is unique up to ${\rm Aut}_{C(F)} (V)$.
(see \cite{SmrB} Theorem 3.3).
\end{rem}

We are going to give a visualization of the above canonical model theorem for triangle cases.

Suppose a quaternion algebra 
$\bm{B} =\left( \frac{a,b}{F} \right)$ is corresponding to a certain arithmetic triangle group. 
Set $\Delta (e_1,e_2,e_3)=\Gamma^{+}(\bm{B}, \mathcal{O}) (=\Gamma ({\mathcal O},1))$.
Suppose the triangle group $\Delta (e_1,e_2,e_3)$ is generated by the triangle $\nabla=\nabla (e_1,e_2,e_3)$.
And let $z_1,z_2,z_3$ be the corresponding vertices of $\nabla$, respectively.

Let us observe the shape of $\nabla$.
Due to Takeuchi's table 2.1 there are 16 commensurable classes for compact arithmetic triangle groups. 
As for the unit groups of the corresponding quaternion algebras, there are 10 scalene triangles as their 
generating $\nabla$'s and 6 $\nabla$'s isosceles triangles  any of which is not a regular triangle.
We call the former {\bf of scalene type} unit group and the latter {\bf of isosceles type} unit group.
 
\begin{prop}
Let $\bm{B} =\left( \frac{a,b}{F} \right)$ be a quaternion algebra corresponding to 
an isosceles type unit group $\Delta(e_1,e_1,e_3) \ (e_1=e_2\not= e_3)$ in Table 2.1.
Suppose a triangle $\nabla (z_1,z_2,z_3)$ is the fundamental triangle that generates $\Delta(e_1,e_1,e_3)$ and 
$z_i$ be the vertex which is the elliptic point of order $e_i$.

(i) The extension $M_0=F(\zeta_{e_1})$ becomes to be a CM field over $F$, where $\zeta_{\nu}$ means a 
primitive root of unity of order $\nu$.
Moreover, 
it holds $M_0=F(i \sqrt{\rho})$ with the following Table 3.1 of $\rho$.

(ii) The elliptic point $z_1$ and $z_2$ are regular fixed points of $M_0\subset \bm{B}$.

(iii) The class number $h(M_0)$ of $M_0$ is equal to 1.
\end{prop}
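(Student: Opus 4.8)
The plan is to realise $M_0$ as the quadratic subfield of $\bm{B}$ generated by an elliptic stabilizer, and to deduce all three assertions from the eigenvalues of that stabilizer together with the ramification of $\bm{B}$. I first note that for all six isosceles entries of Table 2.1 one has $\Gamma^{(1)}(\bm{B},\mathcal{O})=\Gamma^{+}(\bm{B},\mathcal{O})$, so for $i=1,2$ the cyclic stabilizer of the vertex $z_i$ in $\Gamma(\mathcal{O},1)$ is generated by an element $g_i\in\mathcal{O}$ with $\det(g_i)=1$. Since $g_i$ realises the rotation by $2\pi/e_1$ at $z_i$, its characteristic polynomial over $F$ is $x^2-t_1x+1$ with $t_1=2\cos(\pi/e_1)$; hence its eigenvalues are $\zeta_{2e_1}^{\pm1}$ and $F(g_i)=F(\sqrt{t_1^2-4})$.

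For (i) I would first check, case by case, that $t_1\in F$ (indeed $t_1=1$ when $e_1=3$ and $t_1=(1+\sqrt5)/2$ when $e_1=5$), so that $F(g_i)$ is a genuine quadratic extension of $F$. Since $e_1$ is odd in every isosceles case, $F(\zeta_{2e_1})=F(\zeta_{e_1})$, whence $M_0:=F(g_i)=F(\zeta_{e_1})$. To see that $M_0$ is CM I would invoke condition (Cd): it makes $\bm{B}\otimes_{F,v}\bm{R}$ a division algebra at every real place $v$ other than the identity, so each such $v$ extends to a complex place of the quadratic field $M_0$; at the identity place $g_i$ is elliptic and so has non-real eigenvalues, making that place complex too. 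Thus $M_0$ is totally imaginary over the totally real $F$, i.e. a CM field. Writing $t_1^2-4=-\rho$ with $\rho=4-t_1^2=4\sin^2(\pi/e_1)$ (totally positive precisely because $M_0$ is totally imaginary) gives $M_0=F(i\sqrt\rho)$, and reducing $\rho$ modulo squares in $F^{\times}$ produces the entries of Table 3.1.

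For (ii) I would use that the fixed point of $g_i$ in $\bm{H}$ is by construction $z_i$. Let $f_i\colon M_0\hookrightarrow\bm{B}$ be the embedding with image $F(g_i)$; in each of the six cases one verifies $\mathcal{O}_{M_0}=\mathcal{O}_F[g_i]\subset\mathcal{O}$, so that $f_i(\mathcal{O}_{M_0})\subset\mathcal{O}$ as Shimura's theorem requires. Regularity is then automatic: in a quaternion algebra the centralizer of the non-central element $g_i$ is exactly the quadratic subfield $F(g_i)=f_i(M_0)$, so no element of $\bm{B}^{\times}$ outside $f_i(M_0^{\times})$ fixes $z_i$. Hence $z_1$ and $z_2$ are regular fixed points of $M_0\subset\bm{B}$ for the respective embeddings $f_1,f_2$, which are interchanged by the reflective symmetry of the isosceles triangle $\nabla$.

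Finally, (iii) is a finite verification. Each $M_0=F(\zeta_{e_1})$ is one of six explicit CM fields, abelian over $\bm{Q}$ of degree $4$ or $8$, and I would compute $h(M_0)=1$ in each case. Using $h(F)=1$ (Remark 2.1) and the factorisation $h(M_0)=h(F)\,h^{-}(M_0)$ reduces the problem to the relative class number $h^{-}(M_0)$, which I would evaluate by the analytic class number formula (a generalised Bernoulli number computation, these fields being abelian over $\bm{Q}$) or simply read off from standard class number tables. I expect this last step to be the real labour of the proposition; by contrast (i) and (ii) are structural, the only delicate points being the verifications $t_1\in F$ and $\mathcal{O}_{M_0}=\mathcal{O}_F[g_i]$, both transparent from the explicit data of the six isosceles cases.
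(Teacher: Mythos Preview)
Your approach is correct and matches the paper's: identify $M_0$ with $F(\gamma_i)$ where $\gamma_i$ generates the stabilizer of $z_i$, deduce (i) and (ii) from this identification, and verify (iii) by table lookup. The paper's own proof is far terser (part (i) is literally ``by direct observations'' and part (iii) cites Yamamura's tables and a biquadratic class-number formula), so your eigenvalue computation, your CM argument via condition (Cd), and your check that $\mathcal{O}_{M_0}=\mathcal{O}_F[g_i]\subset\mathcal{O}$ are useful fleshings-out of steps the paper leaves implicit.
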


{\center

~
\vskip 3mm
\begin{tabular}{ccccccc} \hline
number&$F$&$\Delta (e_1,e_1,e_3)$&$M_0$&$\rho$&\\ \hline
(3)&$\bm{Q}(\sqrt{2})$&$\Delta (3,3,4)$&$F(\zeta_3)$&$3$&\\ \hline
(6)&$\bm{Q}(\sqrt{5})$&$\Delta (5,5,2)$&$F(\zeta_5)$&$\sin (\frac{\pi}{5})$&\\ \hline
(7)&$\bm{Q}(\sqrt{5})$&$\Delta (5,5,3)$&$F(\zeta_5)$&$\sin (\frac{\pi}{5})$&\\ \hline
(8)&$\bm{Q}(\sqrt{5})$&$\Delta (3,3,5)$&$F(\zeta_3)$&$3$&\\ \hline
(13)&$\bm{Q}(\cos (\frac{\pi}{8}))$&$\Delta (3,3,8)$&$F(\zeta_3)$&$3$&\\ \hline
(18)&$\bm{Q}(\sqrt{2}, \sqrt{5})$&$\Delta (5,5,4)$&$F(\zeta_5)$&$\sin (\frac{\pi}{5})$&\\ \hline

\hline
\end{tabular}
\vskip 5mm
\centerline{Table 3.1:Generators of CM field $M_0$ for isosceles types}
}
\par \vskip 4mm
\begin{proof}
(i) We can obtain the assertion by direct observations.

(ii) Let $\gamma_i$ be a generator of the isotropy group of  $z_i (i=1,2,3)$ in $M_2(\bm{R})$. Then it holds 
$\Delta (e_1,e_2,e_3)=\langle \gamma_1,\gamma_2,\gamma_3 \rangle $ and $\gamma_i^{e_i}={\rm id}$.
So, it holds $M_0\cong F(\gamma_1) (=F(\gamma_2))$. Hence, $z_1(z_2, resp.)$ is a regular fixed point of $M_0$.

(iii) We have a well known formula for the class number of the biquadratic field of the type $\bm{Q} (\sqrt{5}, \sqrt{-\ell})$. 
Moreover, we can refer the table of Yamamura \cite{Ymm} to know the class number of other types of  $M_0$.
\end{proof}

%%%%%%%%%%%%%%Main Thm

\begin{thm} {\rm (Main Theorem)}
Let $\ds{\bm{B}/F }$ a quaternion algebra coming from a compact type arithmetic triangle group. 
Set  $\Delta =\Delta (e_1,e_2,e_3)=\Gamma ({\mathcal O}_{\bm{B}},1)$.
Let  $\nabla (z_1,z_2,z_3)$ be a triangle on $\bm{H}$ with vertices $z_i (i=1,2,3)$ which generates 
the triangle group $\Delta (e_1,e_2,e_3)$. Assume the order of $z_i$ is equal to $e_i (i=1,2,3)$.

(I) The case $\Delta$ is an scalene type unit group.  Let $\varphi (z)$ be a hypergeometric modular function 
with respect to $\Delta (e_1,e_2,e_3)$ which is normalized with the condition

\[
{\rm (Ncd):} \qquad  \varphi (z_1)=1, \varphi (z_2)=-1, \varphi (z_3)=\infty.
\]
Then it gives the canonical model of $\bm{H}/\Gamma ({\mathcal O}_{\bm{B}},1)$ together with the 
Riemann sphere $S=\bm{P}^1$ that is the image of $\varphi (z)$.

(II)  The case $\Delta$ is an isosceles type unit group. 
Let $\varphi (z)$ be a hypergeometric modular function with the same condition (Ncd) in (I). 
We make another function $\tilde{\varphi} (z)$ by using a pure imaginary number $i\sqrt{\rho}$,
 where $\rho$ is the number obtained in the previous proposition:
\[
\tilde{\varphi} (z) =i\sqrt{\rho} \cdot \varphi (z),
\]
Then one of $\varphi (z)$ and $\tilde{\varphi} (z)$ becomes to be the canonical model together with 
the image $S=\bm{P}^1$.
 \end{thm}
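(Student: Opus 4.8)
The plan is to avoid exhibiting Shimura's model explicitly and instead to \emph{compare} it with our hypergeometric function. Since $h(F)=1$ we have $C(F)=F$, so by Remark~\ref{ShimuraUniquenessThm} the canonical model $(\psi,\bm{P}^1)$ is unique up to $\mathrm{Aut}_{F}(\bm{P}^1)=PGL_2(F)$. Both $\psi$ and the normalized hypergeometric function $\varphi$ induce biholomorphisms $\bm{H}/\Gamma(\mathcal{O},1)\cong\bm{P}^1$ (for $\varphi$ this is the content of Section~2, the quotient of $\bm{H}$ by $\Delta(e_1,e_2,e_3)$ being a sphere with three orbifold points), so there is a unique $\sigma\in PGL_2(\bm{C})$ with $\varphi=\sigma\circ\psi$. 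The entire theorem then reduces to showing $\sigma\in PGL_2(F)$ (case (I)), respectively that one of $\varphi,\tilde\varphi$ yields such a $\sigma$ (case (II)); it suffices to control $\sigma$ through its values on the three vertices. I first extract a real structure on each side. Because the Gauss equation \eqref{eq: GHGE} has real parameters, $\varphi$ carries each side of $\nabla$ onto an arc of $\bm{P}^1(\bm{R})$ and intertwines reflection in a side with complex conjugation on the target, sending $z_1,z_2,z_3$ to $1,-1,\infty\in\bm{P}^1(\bm{R})$; and since $\psi$ is defined over the totally real $F$, there is an anti-holomorphic involution $\iota_0$ of $\bm{H}$ (necessarily normalizing $\Gamma(\mathcal{O},1)$) with $\psi\circ\iota_0=c\circ\psi$, where $c$ is the standard conjugation.

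For case (I), each $z_i$ is the fixed point of an elliptic $\gamma_i\in\Gamma(\mathcal{O},1)$ of order $e_i$, and $M_i:=F(\gamma_i)$ is a CM field for which $z_i$ is a regular fixed point. Condition~(3) together with the class-number-one input gives $M_i(\psi(z_i))=C(M_i)=M_i$, i.e. $\psi(z_i)\in M_i$. As the orders $e_1<e_2<e_3$ are pairwise distinct, $z_i$ is, up to $\Gamma(\mathcal{O},1)$, the \emph{unique} elliptic point of its order; since $\iota_0$ normalizes $\Gamma(\mathcal{O},1)$ and preserves the order of elliptic points, $\iota_0(z_i)$ is $\Gamma(\mathcal{O},1)$-equivalent to $z_i$, whence $\overline{\psi(z_i)}=\psi(\iota_0(z_i))=\psi(z_i)$ and $\psi(z_i)\in M_i\cap\bm{R}=F$ (a CM field being totally imaginary, its intersection with $\bm{R}$ is exactly $F$). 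Thus $\sigma$ carries the three $F$-rational points $\psi(z_i)$ to $1,-1,\infty$, so $\sigma\in PGL_2(F)$ and $\varphi$ is a canonical model.

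In case (II) the isosceles symmetry forces a different outcome, and this is where the twist by $i\sqrt{\rho}$ enters. The reflection $\iota_{\mathrm{sym}}$ across the axis through $z_3$ exchanges $z_1,z_2$ and induces the CM involution of $M_0=F(i\sqrt{\rho})$ on the embedded field, so for the canonical model $\psi(z_2)=\overline{\psi(z_1)}$ with $\psi(z_1)\in M_0$; as $z_1\neq z_2$ in the quotient, $\psi(z_1)=u+v\,i\sqrt{\rho}$ has $v\neq0$ and is genuinely non-real, and $\psi$ sends the symmetry axis onto $\bm{P}^1(\bm{R})$. By contrast the normalization (Ncd) sends that same axis onto the \emph{imaginary} axis $i\bm{R}\cup\{\infty\}$, since $\varphi\,\iota_{\mathrm{sym}}\,\varphi^{-1}$ is the involution $w\mapsto-\bar w$. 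Hence $\sigma$ cannot lie in $PGL_2(\bm{R})$ for $\varphi$ itself; multiplying the target coordinate by $i\sqrt{\rho}$ rotates $i\bm{R}$ back onto $\bm{R}$ and sends $z_1,z_2$ to the genuinely $M_0$-conjugate pair $\pm i\sqrt{\rho}$. Rerunning the three-point computation for $\tilde\varphi$ then gives $\tilde\sigma\in PGL_2(\bm{R})\cap PGL_2(M_0)=PGL_2(M_0\cap\bm{R})=PGL_2(F)$, using $\psi(z_3)\in F$ as in case (I). Deciding which of $\varphi,\tilde\varphi$ produces the correct orientation of the real locus is the remaining case-by-case check recorded in the statement.

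The hard part will be the arithmetic half of this real-structure bookkeeping. Shimura's condition~(3) controls only the field $M_i(\psi(z_i))$ relative to $M_i$, so the passage to $\psi(z_i)\in F$ (respectively to the conjugate pair in $M_0$) genuinely requires the class-number-one facts of Remark~\ref{rem2.1} and the previous Proposition, together with the identification of the anti-holomorphic involution $\iota_0$ attached to the $F$-structure of $V$ with the geometric reflection (case (I)) or symmetry $\iota_{\mathrm{sym}}$ (case (II)) of $\nabla$. Matching these two a priori unrelated real structures---the analytic one from the real parameters of \eqref{eq: GHGE} and the arithmetic one from the totally real $F$---is the crux; once they are shown to coincide, or to differ exactly by the rotation $i\sqrt{\rho}$, the descent of $\sigma$ to $PGL_2(F)$ is immediate.
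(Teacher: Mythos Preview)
Your overall strategy---compare $\varphi$ with the abstract canonical model $\psi$ via a M\"obius map $\sigma$ and show $\sigma\in PGL_2(F)$---is exactly the paper's. The divergence, and the gap, is at the arithmetic step where you pin down $\psi(z_i)$. In Case (I) you argue $\psi(z_i)\in F$ by combining two ingredients: (a) the real structure on $V$ forces $\psi(z_i)\in\bm{R}$, and (b) condition (3) of Shimura's Main Theorem gives $M_i(\psi(z_i))=C(M_i)$, so $\psi(z_i)\in M_i$ \emph{provided} $h(M_i)=1$, and then $M_i\cap\bm{R}=F$. But $h(M_i)=1$ for the three CM fields $M_i=F(\gamma_i)$ attached to the vertices of a \emph{scalene} unit group is nowhere established; Proposition 3.1 treats only $M_0$ for the six isosceles cases. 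Without that input your argument yields only $\psi(z_i)\in C(M_i)\cap\bm{R}$, which can strictly contain $F$, and the descent of $\sigma$ to $PGL_2(F)$ does not follow.

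The paper avoids this entirely by invoking a further result of Shimura, Theorem 3.17 of \cite{SmrB}: for \emph{every} $\tau\in{\rm Gal}(\bm{C}/F)$ one has $\psi(z_i)^{\tau}=\psi(w_i)$ for some elliptic point $w_i$ of the same order as $z_i$. In the scalene case each $e_i$ occurs exactly once, so $w_i$ is $\Gamma(\mathcal{O},1)$-equivalent to $z_i$ and $\psi(z_i)^{\tau}=\psi(z_i)$ for all $\tau$; hence $\psi(z_i)\in F$ directly, with no hypothesis on $h(M_i)$ and no real-structure bookkeeping. Your anti-holomorphic involution $\iota_0$ is effectively the special case $\tau=c$ of this statement, which only yields $\psi(z_i)\in\bm{R}$; the ``crux'' you flag at the end---matching the analytic and arithmetic real structures---is precisely what Theorem 3.17 supplies in full Galois-equivariant form, and is not recoverable from conditions (1)--(3) of the Main Theorem alone. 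In Case (II) the paper again uses Theorem 3.17 to set up the dichotomy: either (a) every $\tau$ fixes $\psi(z_1),\psi(z_2)$ individually, in which case $\varphi$ is already canonical, or (b) some $\tau_0$ swaps them, and a short computation then forces $\psi(z_1)\in F\cdot i\sqrt{\rho}$ so that $\tilde\varphi$ is canonical. Your version presupposes (b) by asserting $\psi(z_1)$ is genuinely non-real, whereas the undecided alternative (a)/(b) is exactly the content of statement (II).
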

 
 \begin{rem}
 At this moment we don't have a criterion to determine which of two candidates in (II) becomes to be 
 the canonical model. Later, we shall show the example that $\tilde{\varphi}(z)$ is the canonical model.
 \end{rem}
 \begin{proof}
 
 Let $(\psi, V)$ be a canonical model that is assured the existence by the above Shimura's Main theorem.
 Because $\Gamma (\mathcal{O}_{\bm{B}},1)$ is a triangle group, $V$ is of genus $0$ as a 
 complex algebraic curve over $F$. 
 Let $q$ be a holomorphic isomorphism (namely, a morphism over $\bm{C}$) from $V$ to $S$ defined by the property $\varphi =q\circ \psi$.
 
Recalling Remark \ref{rem2.1},
if we can show that $q$ is a morphism defined over $F$, by observing the above Shimura's theorem and Remark 
\ref{ShimuraUniquenessThm}
$(\varphi ,S)$ becomes to be a canonical model.
 We may assume that $V$ is 
 given as a projective variety in $\bm{P}^N$ defined by a homogeneous ideal in 
 $F[X_0,\cdots, X_N]$, where $[X_0,\cdots ,X_N]$ being homogeneous coordinates of $\bm{P}^N$.
% defined by a system of defining equations 
% \\ $\{ Q_1(X_0,\ldots ,X_N),\ldots , Q_m (X_0,\ldots ,X_N)\}$ 
% with $Q_i(X_0,\ldots ,X_N)\in F[X_0,\ldots ,X_N]\ (i=1,\ldots ,m)$.
 For any element $\tau \in {\rm Gal} (\bm{C}/F)$, we have an action $\tau : V\rightarrow V$ by 
 $\underline{X}=(X_0,\ldots ,X_N) \mapsto \underline{X}^{\tau}=(X_0^{\tau},\ldots ,X_N^{\tau})$.
 
 So, $q$ is given by the form
 $q(X_0,\ldots ,X_N)=[P_0(X_0,\ldots ,X_N),P_1(X_0,\ldots ,X_N)] \in S$:
 \begin{eqnarray*}
 &&
 \begin{cases}
 P_0(X_0,\ldots ,X_N)= \sum_{\nu}  c_{0\nu} X^{\nu} , \cr
  P_1(X_0,\ldots ,X_N)= \sum_{\nu}  c_{1\nu} X^{\nu}
 \end{cases}
 \end{eqnarray*}
  with some $P_0,P_1\in \bm{C} [X_0,\ldots ,X_N]$ of the same degree.
 Hence, we can define a $\bm{C}$-morphism $q^{\tau} : V\rightarrow S$ by
 \begin{eqnarray*}
 &&
 q^{\tau} =[P_0^{\tau}(X_0,\ldots ,X_N),P_1^{\tau}(X_0,\ldots ,X_N)] ,
 \end{eqnarray*}
  with
 \[
 P_i^{\tau} (X_0,\ldots ,X_N)= \sum_{\nu}  c_{i\nu}^{\tau} X^{\nu}, \ (i=1,2).
 \]
 Note that
 \[
 q^{\tau}  (X_0,\ldots ,X_N)=( q(\tau^{-1}(X_0, \ldots ,X_N)))^{\tau}.
 \]
 We have the following diagram.
 \begin{center}
\begin{picture}(250,140)
   \put(147,120){$S$}
 \put(147,50){$V$}
   \put(60,120){$\bm{H}$}
    \put(85,90){$\bm{H}/\Gamma (\mathcal{O},1)$}
   \put(70,115){\vector(1,-1){15}}
    \put(127,100){\vector(1,1){15}}
     \put(73,120){\vector(1,0){70}}
 \put(60,50){$\bm{H}$}
  \put(85,20){$\bm{H}/\Gamma (\mathcal{O},1)$}
   \put(70,45){\vector(1,-1){15}}
    \put(127,30){\vector(1,1){15}}
     \put(73,50){\vector(1,0){70}}
      \put(110,55){$\psi$}
       \put(110,125){$\varphi$}
     \put(66,62){\line(0,1){53}}
       \put(64,62){\line(0,1){53}}
     \put(150,62){\vector(0,1){53}}
      \put(155,90){$q$}
        \put(104,30){\line(0,1){53}}
       \put(106,30){\line(0,1){53}}
%  \put(70,90){\vector(0,-1){40}}
%    \put(175,90){\vector(0,-1){40}}
%  \put(85,100){\vector(1,0){75}}
  \end{picture}
\vspace*{-10pt}

Diagram 3.1  Two modular functions $\varphi$ and $\psi$

\end{center}

Take any element $\tau \in {\rm Gal} (\bm{C}/F)$.
By observing Table 2.1, we assume $e_3\not= e_1$ and $e_3\not= e_2$.

According to Theorem 3.17  in \cite{SmrB} (also we should use the condition $C(F)=F$ in our case ) for any index $i$ we have 
\begin{eqnarray} \label{eq:Shimura317Coseq}
&&
(\psi (z_i))^{\tau^{-1}}=\psi (w_i)
\end{eqnarray}
 with some elliptic point $w_i$ of $\Gamma (\mathcal{O},1)$ which has the same order as $z_i$.
 In case $i=3$, the corresponding $w_i=w_3$ should be an elliptic point that is $\Gamma (\mathcal{O},1)$-equivalent to $z_3$. 
 So we have 
 \begin{eqnarray} \label{eq:psiz3}
 &&
 \psi (z_3)^{\tau}=\psi (z_3).
 \end{eqnarray}
Namely, $\psi (z_3)$ is a $F$-rational point of $V$.

Since $V$ is a curve of genus $0$ defined over $F$, the image of the anti-canonical map is a plane 
conic defined over $F$.
%Generally, by considering the $F$-vector space of holomorphic sections of the anti-canonical bundle, $V$ is $F$-isomorphic to 
%a plane conic defined over $F$.
In our case, $V$ has at least one $F$-rational point, saying $v_{\infty}$. 
So the projection from $V$ to a 
coordinate plane centered at $v_{\infty}$ defines an $F$-isomorphism from $V$ to $\bm{P}^1$. 
Consequently, we may assume $V=\bm{P}^1$ from the beginning. So we may put
\[
q(v)=\frac{a v+b}{cv+d},
\]
with $\begin{pmatrix} a&b\cr c&d \end{pmatrix} \in GL_2(\bm{C})$. 
We set
$\ds{ q^{\tau} (v)=\frac{a^{\tau} v+b^{\tau}}{c^{\tau}v+d^{\tau}} }$.
 So we have
 \begin{eqnarray*}
 &&
 q^{\tau} (\psi (z_i)) =[q (\tau^{-1}(\psi (z_i)))]^{\tau} =
[q (\psi (w_i))]^{\tau}  =\varphi (w_i)^{\tau} =\varphi (w_i) =q(\psi (w_i)).\qquad (\ast)
 \end{eqnarray*}
 Here, the second equality is deduced from ( \ref{eq:Shimura317Coseq}).

 For $z_1$ and $z_2$, it remains two possibilities:
 \begin{eqnarray*}
 &&
 \begin{cases}
 {\rm (a)}  \ q^{\tau}(\psi (z_1))=q(\psi (z_1)) \ \mbox{and} \ q^{\tau}(\psi (z_2))=q(\psi (z_2)),
 \ \mbox{for\ any\ element \ $\tau$\ of} \ {\rm Gal} (\bm{C}/F)
 \cr
  {\rm (b)} \  q^{\tau}(\psi (z_1))=q(\psi (z_2))\ \mbox{and} \  q^{\tau}(\psi (z_2))=q(\psi (z_1)),
   \ \mbox{for\ some\ element \ $\tau = \tau_0$\ of} \ {\rm Gal} (\bm{C}/F).
 \end{cases}
 \end{eqnarray*}
 We make our argument by separating several cases.
 
 (i) The case of scalene type unit group. 
 
 By the same argument as for $z_3$, we obtain that $\psi (w_i)=\psi (z_i)$ and that $v_i=\psi (z_i)\in F,  \ (i=1,2,3)$.
 Observing the equality $(\ast)$, we have $q^{\tau} (v_i)=q (v_i)\ (i=1,2,3)$. Namely, it holds
 \[
 \frac{a^{\tau} v_i+b^{\tau}}{c^{\tau} v_i+d^{\tau}} =\frac{av_i+b}{cv_i+d} \ (i=1,2,3)
 \]
 for every $\tau \in {\rm Gal} (\bm{C}/F)$. It means that $q$ is defined over $F$, namely $q$ is a $F$ isomorphism. 
 Referring Remark 3.1, we know that $(\varphi , S)$ is a canonical model.
 
  (ii) The case of isosceles type unit group. 
  
  If it holds above (a), we obtain the canonical model $(\varphi ,S)$ by the same argument as in (i). 
  So we concentrate our argument to the case (b).
 
 (step 1) We may assume $\varphi (z_3)=\psi (z_3)=\infty$. So $q$ takes the form 
 $q(v)=a v+b, q^{-1} (t) =\alpha t+\beta$.
 
 (step 2) It holds
 \[
 \begin{cases}
 \psi (z_1)+\psi (z_2)=q^{-1} (\varphi (z_1))+q^{-1} (\varphi (z_2))=
 \alpha (\varphi (z_1)+\varphi (z_2))+2\beta =2\beta
 \cr
  ={q^{-1}}^{\tau} (\varphi (z_2))+{q^{-1}}^{\tau} (\varphi (z_1)) 
  =\alpha^{\tau}  (\varphi (z_1)+\varphi (z_2))+2\beta^{\tau} =2\beta^{\tau}
 \end{cases}
 \]
 for any $\tau \in {\rm Gal} (\bm{C}/F)$. 
 So, $\beta \in F$. Hence we may assume  $\beta =0$ by the $F$-isomorphism  $\psi \mapsto \psi -\beta/2$.
 
 (step 3) It holds
 \begin{eqnarray*}
 &&
 (\psi (z_1) \psi (z_2))^{\tau} = (\psi (z_1))^{\tau} ( \psi (z_2)))^{\tau} 
 =(q^{-1})^{\tau} (\varphi (z_1)^{\tau}) (q^{-1})^{\tau} (\varphi (z_2)^{\tau}) 
 \\&&
 =\alpha^{\tau} (q^{-1})^{\tau} (\varphi (z_1)^{\tau} \varphi (z_2)^{\tau} )
 =\alpha ^{\tau} (q^{-1})^{\tau} ((\varphi (z_1) \varphi (z_2))^{\tau} )
 \\&&
 \mbox{because $\varphi (z_1) \varphi (z_2) \in F$ , hence}
 \\&&
 =\alpha ^{\tau} (q^{-1})^{\tau} (\varphi (z_1) \varphi (z_2) )
 =(q^{-1})^{\tau} (\varphi (z_1))(q^{-1})^{\tau} (\varphi (z_2))
 =(q^{-1}) (\varphi (z_1))(q^{-1}) (\varphi (z_2))=\psi (z_1)\psi (z_2).
 \end{eqnarray*}
 Then,  $\psi (z_1)\psi (z_2)\in F$.
 
 (step 4) $\alpha ^{\tau} =-\alpha$ for $\tau =\tau_0$, especially $\alpha \notin F$.
 
 For,
 \begin{eqnarray*}
 &&
 (q^{-1}) (\varphi (z_1))(q^{-1}) (\varphi (z_2)) =\alpha^2 \varphi (z_1)\varphi (z_2)
\\&&
=  (q^{-1})^{\tau} (\varphi (z_1))(q^{-1})^{\tau} (\varphi (z_2)) =(\alpha ^{\tau} )^2 \varphi (z_1)\varphi (z_2).
\end{eqnarray*}

 (step 5) We may put $\tau_0$ to be the complex conjugation map $c \mapsto \overline{c}$. Especially
 $\alpha$ is a pure imaginary number.
 
 Unless, taking $\overline{\alpha} = \alpha ^\tau $ we have 
 \begin{eqnarray*}
 &&
\overline{\alpha} = \alpha ^\tau =\alpha ^\tau \varphi (z_1)
=(q^{-1})^{\tau} (\varphi (z_1))=(q^{-1}) (\varphi (z_1))=\alpha  \varphi (z_1)=\alpha.
\end{eqnarray*}
It means that $\alpha =\psi (z_1)$ is a real number. Considering Shimura's main theorem, it contradicts that $\psi (z_1) \notin F$ is a generator 
of the field $C(M_0)=M_0=F(i\sqrt{\rho})$. So we have the assertion.
 
 (step 6)  Up to a constant factor in $F$, it holds $\alpha =\psi (z_1)=i\ \sqrt{\rho}$ .

 For,  it holds $\alpha =\psi (z_1)\notin F$. 
 Due to Shimura's Main theorem, the singular value of the canonical model $\psi (z)$ at $z_1$ generates the Hilbert class field of $M_0$. 
  Observing the fact  $h(M_0)=1$, it must hold
   $\psi (z_1)\in F(i\sqrt{\rho})$. Because $\alpha$ is pure imaginary. We may put
 $\psi (z_1)=i\sqrt{\rho}$  up to a factor in $F$.
   
 (step 7) As a direct consequence of (step 6), In the case (b), it holds $\psi (z_1)\notin F$ and $\psi (z)= \tilde{\varphi}$.
 \end{proof}
 \begin{rem}
 As we mentioned in Remark \ref{excluderem}, we had to exclude two cases, (2) and (12), from our consideration. 
 We believe that even in these cases there would exists an explicit description of the canonical model in terms of 
 the hypergeometric modular function. Still note that, due to Shimura's main theorem, we obtain only the Hilbert class fields 
 of imaginary quadratic fields for (2). So, the case (12) remains to be interesting.
 \end{rem}
%%%%%%%%%%%%%%%%%%%%%%%
\section{An explicit form of Shimura's canonical model for $\Delta (3,3,5)$}
\paragraph{Koike's modular function for a family of Pentagonal curves.}
The Schwarz map for the Gauss hypergeometric differential equation $E(\frac{2}{5}, \frac{3}{5}, \frac{6}{5})$ can be 
identified (by way of the integral representation of the two independent solutions) with the period map
for a family of algebraic curves of genus 4:
\[
C(\lambda ): y^5=x^2(x-1)(x-\lambda)
\]
with a parameter $\lambda \ (\not= 0,1)$. The monodromy group is the triangle group $\Delta =\Delta (5,5,5)$ .
By considering the inverse map we obtain the modular function $\lambda (u)$ with respect to $\Delta$ 
that is defined on the period domain.

K. Koike \cite{Kkk} showed a representation of $\lambda (u)$ in terms of the Riemann theta constants.

Suppose $0<\lambda <1$. We regard $C(\lambda)$ as a 5-sheeted branched cover over the $x$-plane with cut lines 
connecting the base point $x_0\in \bm{H}_{-}$ and critical points (i.e. the projection of ramification points) 
 $x=0,\lambda ,1,\infty $. 
Set $\gamma_2, \gamma_3$ be two 1- cycles in $H_1(C(\lambda ),\bm{Z})$ indicated in Fig. 4.1  below, where 
(1) means the analytic continuation of the real branch of $y$ on $x>1$ along an arc that does not intersect the indicated cut lines, and 
(2) and (3) means the branches of $y$ given by  the images of (1) by the automorphisms $(x,y)\mapsto (x,\zeta_5y)$ and 
$(x,y)\mapsto (x,\zeta_5^2y)$, respectively, where $\zeta_5=e^{2\pi i/5}$.

  \vskip 5mm
{\center
\scalebox{.85}
{ \includegraphics{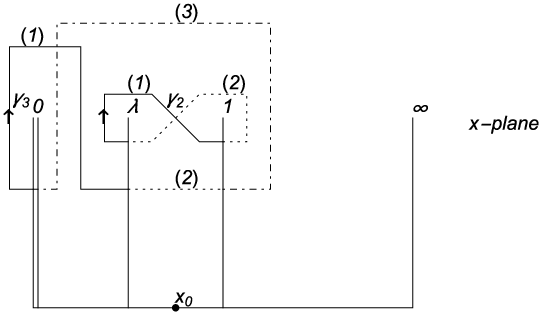} } 
%\qquad
%\scalebox{.7}
%{ \includegraphics{ShimuraNFD15.eps} }

 {Fig.4.1  : homology cycles on $C(\lambda)$ \\
 }
 }
\vskip 4mm

By making the analytic continuation, the integrals 
\[
\eta_2 (\lambda) =\int_{\gamma_2} \frac{dx}{y^2}, \eta_3 (\lambda) =\int_{\gamma_3} \frac{dx}{y^2}
\]
 give multivalued analytic functions
on the $\lambda$ space. They are  independent solutions of
 $E(\frac{2}{5}, \frac{3}{5}, \frac{6}{5})$.
According to Koike \cite{Kkk} , the image of the Schwarz map $\ds{\mathcal{F} (\lambda) = \frac{\eta_2(\lambda)}{\eta_3(\lambda)}}$
is given by the disc in $\bm{P}^1$:
\begin{eqnarray}
&&
\mathcal{D} =\{ [\eta_2,\eta_3]\in \bm{P}^1: |\eta_2|^2+\omega |\eta_3|^2<0\}, \ (\omega = (1-\sqrt{5})/2).
\end{eqnarray}
We have a modular embedding of $\mathcal{D}$ into the Siegel upper half space
 \[
 \frak{S}_4 =\{ \Omega \in GL_4(\bm{C}): ^t\Omega =\Omega , \Ima{\Omega} >0\}
 \] 
 by the following manner:
 {\small
 \begin{eqnarray*}
 &&
\Omega (u)=\frac{1}{ \eta_2^2-\left(1+e^{\frac{2 i \pi }{5}}\right) e^{-\frac{4 i \pi }{5}} \eta_3^2}
 \\&&
[ \left(
\begin{array}{cccc}
 \left(-1+e^{-\frac{4 i \pi }{5}}\right) \left( \eta_2^2+\eta_3^2\right) &
   \left(1-e^{\frac{4 i \pi }{5}}\right) \eta_2\eta_3& 0 & 0 \\
 \left(1-e^{\frac{4 i \pi }{5}}\right) \eta_2 \eta_3 & \left(-1+e^{\frac{4 i \pi
   }{5}}\right) \left(\eta_2^2-e^{-\frac{4 i \pi }{5}} \eta_3^2\right) & 0 & 0 \\
 e^{-\frac{4 i \pi }{5}} \left(\left(1+e^{\frac{2 i \pi }{5}}\right)
   \eta_2^2+\eta_3^2\right) & \left(1-e^{-\frac{4 i \pi }{5}}\right) \eta_2
   \eta_3 & 0 & 0 \\
 \left(-e^{\frac{2 i \pi }{5}}+e^{-\frac{4 i \pi }{5}}\right) \eta_2 \eta_3 &
   \left(e^{\frac{2 i \pi }{5}}+e^{\frac{4 i \pi }{5}}\right)
   \left(\eta_2^2-e^{-\frac{4 i \pi }{5}} \left(1+e^{\frac{4 i \pi }{5}}\right)
  \eta_3^2\right) & 0 & 0 \\
\end{array}
\right)
\\&&
+\left(
\begin{array}{cccc}
 0 & 0 & e^{-\frac{4 i \pi }{5}} \left(\left(1+e^{\frac{2 i \pi }{5}}\right)
   \eta_2^2+\eta_3^2\right) & \left(-e^{\frac{2 i \pi }{5}}+e^{-\frac{4 i \pi
   }{5}}\right) \eta_2 \eta_3 \\
 0 & 0 & \left(1-e^{-\frac{4 i \pi }{5}}\right) \eta_2 \eta_3 & \left(e^{\frac{2 i
   \pi }{5}}+e^{\frac{4 i \pi }{5}}\right) \left(\eta_2^2-e^{-\frac{4 i \pi }{5}}
   \left(1+e^{\frac{4 i \pi }{5}}\right) \eta_3^2\right) \\
 0 & 0 & -e^{\frac{4 i \pi }{5}} \left(\eta_2^2-\left(1+e^{\frac{2 i \pi }{5}}\right)
   \eta_3^2\right) & \left(e^{-\frac{2 i \pi }{5}}-e^{\frac{2 i \pi }{5}}\right)
  \eta_2\eta_3 \\
 0 & 0 & \left(e^{-\frac{2 i \pi }{5}}-e^{\frac{2 i \pi }{5}}\right) \eta_2 \eta_3
   & -e^{-\frac{4 i \pi }{5}} \left(\eta_2-\left(1+e^{-\frac{2 i \pi }{5}}\right)
   \eta_3^2\right) \\
\end{array}
\right)
],
 \end{eqnarray*}
 }
where $\ds{u=\frac{\eta_2}{\eta_3}}$.  
Note that the monodromy group $\Delta (5,5,5)$ is given as a subgroup of  $Sp_8(\bm{Z})$
that preserves $\Omega (\mathcal{D})$.
 
Set the Riemann theta constant on $\frak{S}_4$ with a characteristic  $(a,b) \in (\bm{Q}^4)^2$:
 \begin{eqnarray*}
 &&
 \vartheta \left[ \begin{matrix} a\cr b\end{matrix} \right] (\Omega)
 =
 \sum_{n\in \bm{Z}^4} \exp [ \pi i\ \! ^t (n+a) \Omega (n+a) +2\pi i \ \! ^t(n+a)b].
 \end{eqnarray*}
We define two following theta characteristics:
 \[
a_{11}=\left[ \begin{matrix} a\cr b\end{matrix} \right]  =\frac{1}{10} \left[ \begin{matrix}1&1&1&1 \cr -2&-2&-1&-1\end{matrix} \right] 
 \]
 and
  \[
a_{19}=\left[ \begin{matrix} a\cr b\end{matrix} \right]  =\frac{1}{10} \left[ \begin{matrix}1&9&1&9 \cr -2&-8&-1&-9\end{matrix} \right] .
 \]
 We define two theta functions on $\mathcal{D}$:
 \[
 \theta_{11} (u)=\vartheta [a_{11}] (\Omega (u)), \theta_{19} (u)=\vartheta [a_{19}] (\Omega (u)).
 \]
 %%%%%%Theorem%%%%%%%
 \begin{thm} {\rm (K. Koike \cite{Kkk})}  The function $\lambda (u) = \left( \frac{\theta_{11} (u)}{\theta_{19} (u)}\right)^5$ on $\mathcal{D}$ gives the 
 inverse of the Schwarz map $\mathcal{F} (\lambda)$, and it gives a holomorphic isomorphism 
$\mathcal{D}/\Delta (5,5,5) \xrightarrow{\sim} \bm{P}^1(\bm{C})$.
Especially, it holds 
$\lambda (\omega e^{-2\pi i/5})=0, \lambda (\omega)=\infty  , \lambda (0)=1$, where $\omega =\frac{1-\sqrt{5}}{2}$.
\end{thm}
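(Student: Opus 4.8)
The plan is to identify the ratio $g(u) := \bigl(\theta_{11}(u)/\theta_{19}(u)\bigr)^5$ with the inverse Schwarz map $\lambda(u)$ by a three-point rigidity argument. The generalities of Section 2 together with Koike's period computation already furnish $\lambda$ itself as the modular function that realizes the isomorphism $\mathcal{D}/\Delta(5,5,5)\cong\bm{P}^1(\bm{C})$, so the entire weight of the theorem rests on the theta side. The strategy is to show that $g$ also descends to the quotient and induces an \emph{isomorphism} onto $\bm{P}^1(\bm{C})$; since any two isomorphisms $\mathcal{D}/\Delta(5,5,5)\to\bm{P}^1$ differ by a M\"obius transformation, and a M\"obius transformation fixing three points is the identity, it then suffices to check that $g$ and $\lambda$ take the same prescribed values $0,\infty,1$ at the three vertices $\omega e^{-2\pi i/5},\omega,0$ of the fundamental triangle $\nabla(5,5,5)$.

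First I would establish that $g(u)$ is genuinely $\Delta(5,5,5)$-invariant. Using the modular embedding $u\mapsto\Omega(u)$ of $\mathcal{D}$ into $\frak{S}_4$ and the realization of $\Delta(5,5,5)$ as the subgroup of $Sp_8(\bm{Z})$ preserving $\Omega(\mathcal{D})$, each generator acts on $\Omega(u)$ by the Siegel fractional-linear action, and the theta transformation formula carries $\theta_{11}$ and $\theta_{19}$ into theta constants with permuted characteristics, multiplied by a common automorphy factor times an eighth root of unity. The point to verify is that these root-of-unity multipliers for $\theta_{11}$ and $\theta_{19}$ agree once one passes to the fifth power of their ratio, so that $g$ carries trivial multiplier and is a bona fide modular function for $\Delta(5,5,5)$. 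Here the block structure of $\Omega(u)$, which reflects the eigenspace decomposition of $H^0(C(\lambda),\Omega^1)$ under the order-$5$ automorphism $(x,y)\mapsto(x,\zeta_5 y)$, is what makes these transformation computations tractable, and the fifth power is precisely what absorbs the order-$5$ elliptic structure of the quotient.

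Next I would compute the divisor of $g$ on $\mathcal{D}/\Delta(5,5,5)\cong\bm{P}^1$ to see that the induced map $\bar g\colon\bm{P}^1\to\bm{P}^1$ has degree one. The essential input is a Thomae-type analysis: the zeros of $\theta_{11}$ and of $\theta_{19}$ correspond to the degenerations of the family $C(\lambda)$ in which two of the branch points $0,1,\lambda,\infty$ collide, that is to $\lambda=0$ and $\lambda=\infty$. Counting these vanishing orders against the three order-$5$ elliptic points shows that $g$ has a single simple zero and a single simple pole modulo $\Delta(5,5,5)$, hence is an isomorphism onto $\bm{P}^1$. Finally, evaluating $\theta_{11},\theta_{19}$ at the three vertices $\omega e^{-2\pi i/5}=\mathcal{F}(0)$, $\omega=\mathcal{F}(\infty)$, $0=\mathcal{F}(1)$ pins down the special values, giving $g(\omega e^{-2\pi i/5})=0$, $g(\omega)=\infty$, $g(0)=1$, and therefore $g=\lambda$.

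The main obstacle will be the transformation-law bookkeeping of the first step combined with the identification of theta-null vanishing with branch-point collisions in the second. Tracking how the characteristics $a_{11},a_{19}$ are permuted by the $Sp_8(\bm{Z})$-images of the triangle generators, and verifying that the resulting automorphy factors and roots of unity cancel in $\bigl(\theta_{11}/\theta_{19}\bigr)^5$, is the delicate computational heart of the argument; this is where I expect essentially all the work to lie, the special-value checks being comparatively routine once the invariance and the degree count are in hand.
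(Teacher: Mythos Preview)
The paper does not prove this theorem at all: it is stated as a result of K.~Koike with a citation to \cite{Kkk}, and no proof is supplied in the present paper. There is therefore nothing in the paper to compare your proposal against; the authors simply import the statement and then use it in Section~5.

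Your outline is a reasonable sketch of the shape such a proof must take (invariance of the theta ratio under the monodromy, a degree computation on the quotient, and a three-point normalization), and indeed this is broadly the architecture of Koike's original argument. But be aware that the substantive content --- the precise tracking of how the $Sp_8(\bm{Z})$-generators act on the characteristics $a_{11},a_{19}$, the cancellation of automorphy factors in the fifth power, and the Thomae-type identification of the theta vanishing loci with the degenerations $\lambda\to 0,1,\infty$ --- is carried out in \cite{Kkk} rather than here, and your proposal correctly identifies this bookkeeping as where the real work lies. If your goal is to reproduce the paper's treatment, a one-line citation suffices; if your goal is an independent proof, you would need to execute those computations in full, which is beyond what the present paper attempts.
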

\begin{rem}
Set $(\xi_1,\xi_2,\xi_3)=(\omega  e^{-2\pi i/5},\omega,0)$. 
 The triangle $\mathcal{F} (\bm{H}_{-}) =\nabla (\xi_1, \xi_2,\xi_3)$ is a generating triangle of $\Delta (5,5,5)$.
\end{rem}
%  \vskip 5mm
%{\center
%\scalebox{.85}
%{ \includegraphics{KoikediscD} } 
%%\qquad
%%\scalebox{.7}
%%{ \includegraphics{ShimuraNFD15.eps} }
%
%
% {Fig.4.2  : Koike Period disc} $\mathcal{D}$ \\
% }
%\vskip 4mm
%%%%%%%%%%%%%%%%%%%%%%%%%%%%%%%%%%%
\section{Examples of the Hilbert class fields of higer degree}
\subsection{The normalized hypergeometric function for the case (8).}
Let us consider the quaternion algebra $\ds{ \bm{B} =\left( \frac{a,b}{F} \right) }$ arising in the class (VIII) in Table App 1 due to Takeuchi.
In this case, we have the unit group $\Gamma (\mathcal{O},1) =\Delta (3,3,5)$ (see Table 2.1).
We may regard $\Delta (5,5,5)$ is a subgroup of $\Delta (3,3,5)$ of index 3. Set $u_1$ be the center of gravity of $\nabla (\zeta_1,\zeta_2, \zeta_3)$, 
and set $(u_1,u_2,u_3)=(u_1,\overline{u_1}, 0)$.
The triangle $\nabla (u_1,u_2,u_3)$ becomes to be a generating triangle of $\Delta (3,3,5)$ (see Fig. 5.1)

By using Koike's $\lambda$ function, set
 \[
\Phi (u) = \frac{1}{3} \frac{\lambda ^3-3\lambda +1}{\lambda (\lambda -1)}.
\]
Then $\varphi (u) =\frac{2}{\sqrt{-3}} \left( \Phi (u) -\frac{1}{2} \right)$ 
is the normalized hypergeometric modular function for $\Delta (3,3,5)$  in the sense that 
$(\varphi (u_1), \varphi (u_2), \varphi (u_3))=(1,-1,\infty)$.
So, $\tilde{\varphi} (u) =\Phi (u) -\frac{1}{2} $ gives the modular function of the same symbol in the main theorem 3.1.

 \vskip 5mm
{\center
\scalebox{.9}
{ \includegraphics{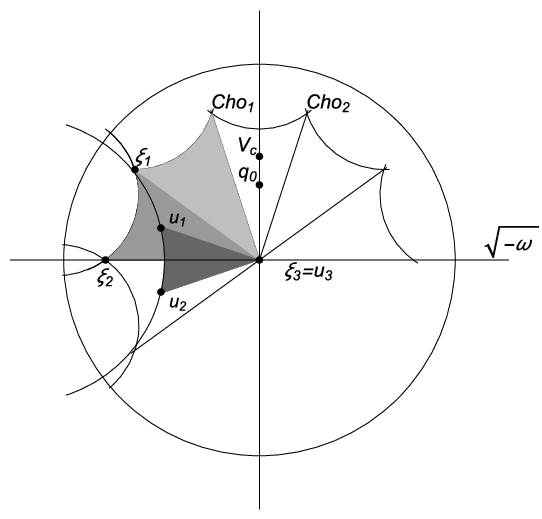} }
%\qquad
%\scalebox{.7}
%{ \includegraphics{ShimuraNFD15.eps} }

 Fig. 5.1.  : Relation between $\Delta (5,5,5)$ and $\Delta (3,3,5)$ \\
 }
 
\vskip 4mm
\begin{prop}
Set
\begin{eqnarray*}
&&
BG_1 = M_1,
\\&&
BG_2 = M_1+(\frac{1}{2} \omega)M_y +\frac{1}{2} M_z,
\\&&
BG_3 = (\frac{1}{2} - \frac{1}{2} \omega) M_1 +(\frac{1}{2} \omega) M_x,
\\&&
BG_4 = M_1+\omega M_y, \  (\omega =\frac{1-\sqrt{5}}{2}).
\end{eqnarray*}
They give a basis of the maximal order  $\mathcal{O}_{\bm{B}}$ as a $F$-module.
\end{prop}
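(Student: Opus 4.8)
The plan is to set $\Lambda = \mathcal{O}_F\,BG_1 + \mathcal{O}_F\,BG_2 + \mathcal{O}_F\,BG_3 + \mathcal{O}_F\,BG_4$, prove that $\Lambda$ is an order of $\bm{B}$, compute its reduced discriminant, and show it coincides with the discriminant $\mathfrak{p}_5$ of $\bm{B}$ recorded for class VIII in Table 2.1; by the characterization of maximal orders together with the uniqueness statement of Remark 2.1, this identifies $\Lambda$ with $\mathcal{O}_{\bm{B}}$ (as an $\mathcal{O}_F$-lattice). First I would express each $BG_i$ in the standard $F$-basis $M_1, M_x, M_y, M_z$ of the embedding $\bm{B}\hookrightarrow M_2(\bm{R})$, obtaining a transition matrix $T\in M_4(F)$. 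Since the row of $BG_1=M_1$ is $(1,0,0,0)$, the determinant reduces to a $3\times 3$ minor and one finds $\det T = \omega^2/4 \neq 0$. Hence $BG_1,\dots,BG_4$ is an $F$-basis of $\bm{B}$, so $\Lambda$ is a full $\mathcal{O}_F$-lattice of rank $4$ containing $1 = M_1 = BG_1$.

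Next I would check closure under multiplication, i.e. that all sixteen products $BG_i\,BG_j$ lie in $\Lambda$. Using $M_x^2 = aM_1$, $M_y^2 = bM_1$, $M_yM_x = -M_z$ and the defining relation $\omega^2 = \omega+1$ of $\mathcal{O}_F = \bm{Z}[\tfrac{1+\sqrt{5}}{2}]$, each product expands in the standard basis and is then rewritten in the $BG$-basis by applying $T^{-1}$; the task is to confirm that every resulting structure constant lies in $\mathcal{O}_F$. In particular this forces $\mathrm{Trd}(BG_i),\mathrm{Nrd}(BG_i)\in\mathcal{O}_F$, so $\Lambda$ consists of integral elements and is therefore an order of $\bm{B}$.

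For maximality I would compute the reduced-trace Gram matrix $\bigl(\mathrm{Trd}(BG_i\,BG_j)\bigr)_{i,j}$ and read off the discriminant ideal $\mathrm{disc}(\Lambda) = \mathfrak{d}(\Lambda)^2$. It is cleanest to start from the standard lattice $\Lambda_0 = \mathcal{O}_F M_1 + \mathcal{O}_F M_x + \mathcal{O}_F M_y + \mathcal{O}_F M_z$, whose Gram matrix is the diagonal $\mathrm{diag}(2, 2a, 2b, -2ab)$ with determinant $-16\,a^2 b^2$, and then use $\mathrm{disc}(\Lambda) = (\det T)^2\,\mathrm{disc}(\Lambda_0)$. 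Simplifying in $\mathcal{O}_F$—where $\omega$ is a unit of norm $-1$, the rational primes $2,3$ are inert, and $5 = \mathfrak{p}_5^2$ ramifies—the goal is to show $\mathrm{disc}(\Lambda) = \mathfrak{p}_5^2$. Since the reduced discriminant of a maximal order of $\bm{B}$ equals the product of its finite ramified primes, namely $\mathfrak{p}_5$ by Table 2.1, this proves $\Lambda$ is maximal, and Remark 2.1 then lets us take $\mathcal{O}_{\bm{B}} = \Lambda$.

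The main obstacle is this last step. Closure is mechanical, but forcing the discriminant ideal to be exactly $\mathfrak{p}_5^2$—and not a proper multiple—depends on delicate cancellations among the half-integer and $\omega$-weighted coefficients together with the precise scaling chosen for $b$ (one verifies $(b)=\mathfrak{p}_5$). Any slip in those coefficients would surface here as a spurious inert factor such as $(3)^2$ in $\mathrm{disc}(\Lambda)$, so this computation carries the real content of the proposition. As an alternative that sidesteps part of the discriminant bookkeeping, one could instead compare $\Lambda$ against an explicitly known maximal order of $\bm{B}$ from Takeuchi \cite{Tku2} or Voight \cite{Vgt} and check that the transition matrix between the two bases lies in $GL_4(\mathcal{O}_F)$.
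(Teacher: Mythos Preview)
Your approach is essentially the same as the paper's: the authors cite the standard criterion (from Voight's lecture notes \cite{VgtL}) that an $\mathcal{O}_F$-lattice $\Lambda$ is a maximal order if and only if it is an order and $|\det(\mathrm{Trd}(G_iG_j))| = D(\bm{B})^2$, and then say the two conditions can be checked by explicit calculation. Your proposal simply spells out those calculations in more detail (linear independence via $\det T$, closure under multiplication, and the discriminant comparison against $\mathfrak{p}_5^2$), so it is correct and aligned with the paper.
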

\begin{proof}
We have the well known criterion for the basis of the maximal order (For example, refer \cite{VgtL} Prop. 4.8, p.50).

[Fact] Let $\bm{B}$ a quaternion algebra over $F$, and let $\{ G_1,G_2,G_3,G_4\}$ be a system of 
elements of $\bm{B}$. Then $\ds {\sum_{i=1}^4 FG_i}$ becomes to be  a maximal order of $\bm{B}$ if and only if 
the following two conditions are satisfied.

(i) The system $\{ G_1,G_2,G_3,G_4\}$ becomes to be a basis of an $F$-vector space, 
and $\ds {\sum_{i=1}^4 FG_i}$ becomes to be an order in $\bm{B}$.

(ii) $ \vert {\rm det }({\rm tr} (G_iG_j))_{1\leq i,j\leq 4} ) \vert = D(\bm{B})^2$ where $D(\bm{B})$ 
is the discriminant of $\bm{B}$.

We can check these conditions by explicit calculation.
\end{proof}
\begin{rem} \label{DHtransformRem}
Set
 \begin{eqnarray*}
 &&
  M_{mc} = \begin{pmatrix} \sqrt{\sqrt{5} \overline{\omega}}&0 \cr 0&1 \end{pmatrix},
  M_{mr} = \begin{pmatrix} i& i\sqrt{-\omega} \cr -1&\sqrt{-\omega} \end{pmatrix}.
 \end{eqnarray*}
 The composition $M_{hd}= M_{mc} M_{mr}$ induces an isomorphism $\mathcal{D} \xrightarrow{\sim}  \bm{H}$.
 For a linear transformation $h$ acting on $\mathcal{D}$,  we obtain a transformation  $\tilde{h} = M_{hd} \circ h\circ M_{hd}^{-1}$  acting on $\bm{H}$. 
 This shifting procedure induces an identification of  the triangle group $\Delta (3,3,5)$ and the unit group $\Gamma (\mathcal{O},1)$. 
 So,we may identify the modular function on $\bm{H}$ with that on $\mathcal{D}$ through this correspondence.
  \end{rem}
  %%%%%%%%%%%
  %%%%%%%%%%%%
 \subsection{A generator systems of the triangle groups via the maximal order.}
 Set
 \begin{eqnarray*}
 &&
 g_{34}=\begin{pmatrix}e^{\pi i/10}& 0\cr 0&e^{-\pi i/10}\end{pmatrix},
 \\&&
 h_{34}=g_{34}^2 =\begin{pmatrix}e^{\pi i/5}& 0\cr 0&e^{-\pi i/5}\end{pmatrix},
 \\&&
h_{45}=
\left(
\begin{array}{cc}
 \frac{-6+6 \sqrt{5}+i \sqrt{50-10 \sqrt{5}}+i \sqrt{10-2 \sqrt{5}}}{4
   \left(-3+\sqrt{5}\right)} & \frac{2}{-3+\sqrt{5}} \\
 -\frac{i \left(-4 i+\sqrt{50-10 \sqrt{5}}+3 \sqrt{10-2 \sqrt{5}}\right)}{4
   \left(-3+\sqrt{5}\right)} & \frac{-7+\sqrt{5}-i \sqrt{10-2 \sqrt{5}}}{2
   \left(-3+\sqrt{5}\right)} \\
\end{array}
\right),\\&&
hn_{45} = e^{-\pi i/5} h_{45},
\\&&
h_{312}=\left(
\begin{array}{cc}
 \frac{-2+2 \sqrt{5}-i \sqrt{50-10 \sqrt{5}}+3 i \sqrt{10-2 \sqrt{5}}}{2
   \left(-3+\sqrt{5}\right)^2} & \frac{20-8 \sqrt{5}-3 i \sqrt{50-10 \sqrt{5}}+7 i
   \sqrt{10-2 \sqrt{5}}}{2 \left(-3+\sqrt{5}\right)^2} \\
 \frac{i \sqrt{10-2 \sqrt{5}}}{-3+\sqrt{5}} & \frac{8-4 \sqrt{5}-i \sqrt{50-10
   \sqrt{5}}+i \sqrt{10-2 \sqrt{5}}}{2 \left(-3+\sqrt{5}\right)^2} \\
\end{array}
\right),
\\&&
hn_{312} = e^{-4\pi i/5} h_{312},
\\&&
hn_{412}=g_{34} hn_{312} g_{34}^{-1},
\\&&
{\small
=\left(
\begin{array}{cc}
 \frac{8-4 \sqrt{5}-i \sqrt{50-10 \sqrt{5}}+i \sqrt{10-2 \sqrt{5}}}{2
   \left(-3+\sqrt{5}\right)^2} & -\frac{2 i \sqrt{10-2 \sqrt{5}}
   \left(-2+\sqrt{5}\right)}{\left(-3+\sqrt{5}\right)^2} \\
 -\frac{i \sqrt{10-2 \sqrt{5}}}{-3+\sqrt{5}} & \frac{8-4 \sqrt{5}+i \sqrt{50-10
   \sqrt{5}}-i \sqrt{10-2 \sqrt{5}}}{2 \left(-3+\sqrt{5}\right)^2} \\
\end{array}
\right)}.
 \end{eqnarray*}
 Then $h_{34}$ is the circuit matrix (i.e. the monodromy matrix) for the closed 
 arc obtained by the movement of $\lambda$ going around  $u=1$ in the positive sense 
 with respect to the basis $^t(\eta_2,\eta_3)$ as a left action.
 Analogously, $h_{45}$ is the circuit matrix coming from the closed arc around $u=\infty$, and $h_{412}$ is that around $u=0$.
 They are of order 5 as transformations in ${\rm Aut} (\mathcal{D})$.
 We use the symbol  $n$ for the normalized matrix that has the determinant =1. Set
\begin{eqnarray*}
&&
h_{\beta} = i \ \overline{\omega} \begin{pmatrix} -1&\omega \cr 1&1\end{pmatrix}.
\end{eqnarray*}
is an involutive element of  ${\rm Aut} (\mathcal{D})$ that exchanges $0$ and $\mathcal{F}({\infty}) =\omega (=\frac{1-\sqrt{5}}{2})$  and 
that fixes their middle point $P_{\beta}$ with $\det (h_{\beta} )=1$.
%$Cho_{0} , Mid_{\infty 0}, 0 $ \UTF{0082}\UTF{00F0}\UTF{0092}\UTF{017E}\UTF{0093}_\UTF{0082}\UTF{00C6}\UTF{0082}\UTF{00B7}\UTF{0082}\UTF{00E9}\UTF{008E}O\UTF{008A}p\UTF{008C}Q $\Delta (2,5,10)$ \UTF{0082}\UTF{00CD} $h_{34}, g_{45}, h_{\beta}$ \UTF{0082}\UTF{00C5}\UTF{0090}¶\UTF{0090}¬\UTF{0082}\UTF{00B3}\UTF{0082}\UTF{00EA}\UTF{0082}\UTF{00E9}\UTF{0081}B
%%%%%%%%%%%%%%%%%
Set
\begin{eqnarray*}
&&
h_{Vck} = \begin{pmatrix} \overline{\omega} \rho_5&1 \cr  \overline{\omega}&\overline{\omega} \rho_5^{-1}\end{pmatrix}, \ (\rho_5= e^{2\pi i/5}).
\end{eqnarray*}

It causes a rotation of the  generating triangle $\nabla_v=\rho_5 ^{-1} \nabla (\xi_1,\xi_2,\xi_3) = 
\nabla (Cho_1, Cho_2,0)$ ,
where  $( Cho_1=\rho_5^{-1} \xi_1,Cho_2=\rho_5^{-1}\xi_2)$.
The center of gravity $Vc =\rho_5 ^{-1} u_2$ of the triangle $\nabla_v$ 
is the fixed point of  $h_{Vck} $ (see Fig. 5.1).

Hence,
$h_{34} h_{Vck}h_{34}^{-1}$ makes a rotation of $\mathcal{F(}1)=0,\mathcal{F}(0),\mathcal{F}(\infty)$ in this order,
and is a generator of the isotropy subgroup of  $V_c$ in the group $\Delta (3,3,5)$.
By the same way, $g_{34}h_{34} h_{Vck}h_{34}^{-1}g_{34}^{-1}$ makes a rotation of the reflection triangle $\rho_5^{-1}\nabla (\overline{\xi_1},\xi_2,\xi_3)$ and 
is a generator of the isotropy subgroup of its center of gravity $V_c'$, the reflection of $V_c$ relative to
the line $Cho_1, u_3$, in the group $\Delta (3,3,5)$.

As a cosequence, 
\begin{eqnarray} \label{eq: 335genesystem}
\begin{cases}
h_{34}, \cr 
rot_{V_c}=h_{34} h_{Vck}h_{34}^{-1},\cr
rot_{V_c'}=g_{34}h_{34} h_{Vck}h_{34}^{-1}g_{34}^{-1}
\end{cases}
\end{eqnarray}
 are the generators of the isotropy subgroups of $u_3=0, V_c, V_c'$ in  $\Delta (3,3,5)$, respectively. 
Then  (\ref{eq: 335genesystem}) is a generator system of $\Delta (3,3,5)$.

By a similar consideration $g_{34}, h_{\beta},h_{34} h_{Vck}h_{34}^{-1}$ is a generating system of the 
normalizer group $\Delta (2,3,10)$.
  
Set $\widetilde{\Delta (5,5,5)}$ be the shifted monodromy group  in ${\rm Aut} (\bm{H})$ by 
Remark \ref{DHtransformRem}. The system given by
 \begin{eqnarray*}
 &&
  \tilde{h}_{34}
  =\left(
\begin{array}{cc}
 \frac{1}{4}+\frac{\sqrt{5}}{4} & \frac{5}{4} \sqrt{\frac{1}{2}
   \left(1+\sqrt{5}\right)}-\frac{1}{4} \sqrt{\frac{5}{2} \left(1+\sqrt{5}\right)} \\
 \frac{1}{4} \sqrt{\frac{1}{2} \left(1+\sqrt{5}\right)}-\frac{1}{4} \sqrt{\frac{5}{2}
   \left(1+\sqrt{5}\right)} & \frac{1}{4}+\frac{\sqrt{5}}{4} \\
\end{array}
\right)
   \\&&
 {\small
 \widetilde{hn}_{45}= \left(
\begin{array}{cc}
 \frac{(-1)^{4/5} \left(1+2 (-1)^{2/5}-\sqrt{5}\right)}{-3+\sqrt{5}} & -\frac{\left(1+(-1)^{3/5}\right)
   \left(-1+\sqrt{5}\right)}{-3+\sqrt{5}} \\
 \frac{2 \left(-1+(-1)^{2/5}\right)}{-3+\sqrt{5}} & \frac{\sqrt[5]{-1} \left(-1+2 (-1)^{3/5}+\sqrt{5}\right)}{-3+\sqrt{5}} \\
\end{array}
\right)}
\\&&
 \widetilde{hn}_{412} =
 \\&&
  {\small
\left(
\begin{array}{cc}
 \frac{8+8 i \sqrt[4]{5}-4 \sqrt{5}-4 i 5^{3/4}+3 i \sqrt{-10+6 \sqrt{5}}-i
   \sqrt{-50+30 \sqrt{5}}}{2 \left(-3+\sqrt{5}\right)^2} & 0 \\
 \frac{-\sqrt{50-20 \sqrt{5}}+2 \sqrt{25-5 \sqrt{5}}+3 \sqrt{10-4 \sqrt{5}}-4
   \sqrt{5-\sqrt{5}}-\sqrt{-5+3 \sqrt{5}}+\sqrt{5 \left(-5+3 \sqrt{5}\right)}}{\sqrt{2}
   \sqrt[4]{5} \left(-3+\sqrt{5}\right)^2} & 0 \\
\end{array}
\right)
}
\\&&
+
  {\small
\left(
\begin{array}{cc}
0 & \frac{\sqrt[4]{5} \left(-8
   \sqrt[4]{5}+4\ 5^{3/4}-\sqrt{50-10 \sqrt{5}}+\sqrt{10-2 \sqrt{5}}+3 \sqrt{-10+6
   \sqrt{5}}-\sqrt{-50+30 \sqrt{5}}\right)}{\left(-3+\sqrt{5}\right)^2 \sqrt{2
   \left(-1+\sqrt{5}\right)}} \\
 0& \frac{-3 i \sqrt{10-4 \sqrt{5}}+i
   \left(\sqrt{50-20 \sqrt{5}}+2 \left(-2+\sqrt{5}\right) \left(\sqrt{5-\sqrt{5}}+i
   \sqrt{-1+\sqrt{5}}\right)\right)}{\left(-3+\sqrt{5}\right)^2 \sqrt{-1+\sqrt{5}}} \\
\end{array}
\right)
}
\end{eqnarray*}
generates $\widetilde{\Delta (5,5,5)}$. They are represented by the basis of $\bm{B}$:
 \begin{eqnarray*}
 &&
  \tilde{hn}_{34} = \frac{\overline{\omega}}{2} M_1+\frac{\omega}{2} M_x
 \\&&
  \tilde{hn}_{45} = \frac{1}{2} \overline{\omega} M_1+(-1+ \frac{1}{2} \overline{\omega})M_x+
  (-1+ \frac{1}{2} \overline{\omega}^3)M_y+(-\frac{1}{2} \overline{\omega} )M_z
\\&&
  \tilde{hn}_{412} = -\frac{1}{2} \overline{\omega}  M_1  + (1 - \frac{1}{2}\omega) M_x + M_z.
 \end{eqnarray*}
 So we have , by observing Table 2.1 of Takeuchi, 
 %%%%%%%%%%%%%%%%
 \begin{prop} 
 $\langle   \tilde{h}_{34} ,  \tilde{h}_{45} ,  \tilde{h}_{412} \rangle =\widetilde{\Delta (5,5,5)} \subset \Gamma^1(\bm{B}, \mathcal{O}\}$.
 \end{prop}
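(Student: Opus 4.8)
The Proposition bundles two claims: the equality $\langle \tilde h_{34},\tilde h_{45},\tilde h_{412}\rangle=\widetilde{\Delta(5,5,5)}$, and the inclusion $\widetilde{\Delta(5,5,5)}\subset\Gamma^{(1)}(\bm{B},\mathcal O)$. I would dispatch them in turn. For the equality, the plan is to invoke the monodromy theory already set up in Section 4. By their construction the matrices $h_{34},h_{45},h_{412}$ are the circuit matrices of $E(\tfrac25,\tfrac35,\tfrac65)$ for loops encircling the singular points $u=1,\infty,0$ relative to the basis ${}^t(\eta_2,\eta_3)$; since $\pi_1(\bm{P}^1\setminus\{0,1,\infty\})$ is generated by three such loops subject to the single relation that their product is null-homotopic, these three matrices generate the monodromy group and satisfy one product relation. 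That monodromy group is $\Delta(5,5,5)$. The shift $M_{hd}=M_{mc}M_{mr}$ of Remark \ref{DHtransformRem} is a fixed conjugation $\mathcal D\xrightarrow{\sim}\bm{H}$, hence a group isomorphism carrying each $h_\ast$ to $\tilde h_\ast$ and $\Delta(5,5,5)$ to $\widetilde{\Delta(5,5,5)}$; transporting the generation statement through it yields the asserted equality.

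For the inclusion it suffices, because $\Gamma^{(1)}(\bm{B},\mathcal O)=\{\gamma\in\mathcal O:\det\gamma=1\}$ is a group, to show that each generator lies in it. Passing to the determinant-one normalizations $\tilde{hn}_\ast$ (which represent the same group elements and, by the meaning of the symbol $n$, already have reduced norm $\det=1$), I would verify membership in the maximal order $\mathcal O$ using the $\bm{B}$-coordinates displayed just above the Proposition. The concrete step is a base change from the frame $\{M_1,M_x,M_y,M_z\}$ to the maximal-order frame $\{BG_1,BG_2,BG_3,BG_4\}$, checking that every resulting coordinate lies in $\mathcal O_F=\bm{Z}[\omega]$. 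The arithmetic of $F=\bm{Q}(\sqrt5)$ is controlled by $\omega^2=\omega+1$, $\omega+\overline\omega=1$ and $\omega\overline\omega=-1$ (so $\omega^{-1}=-\overline\omega$ and $\overline\omega^3=2\overline\omega+1$), which collapses each coefficient to $\bm{Z}[\omega]$. For example $\tilde{hn}_{34}=\tfrac{\overline\omega}{2}M_1+\tfrac{\omega}{2}M_x$ is literally $BG_3$, hence lies in $\mathcal O$ with $\det=1$; the same bookkeeping applied to $\tilde{hn}_{45}$ and $\tilde{hn}_{412}$ completes the generator check and gives $\widetilde{\Delta(5,5,5)}\subset\Gamma^{(1)}(\bm{B},\mathcal O)$.

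The generation half is soft, so I expect the main obstacle to be the integrality verification: first simplifying the nested-radical entries of $\tilde{hn}_{45}$ and $\tilde{hn}_{412}$ down to the clean $\bm{B}$-coordinate expressions stated above (a finite but delicate manipulation with the relations among the $t_i=2\cos(\pi/e_i)$), and then confirming that the base change into $\{BG_i\}$ keeps every coefficient in $\bm{Z}[\omega]$. The $M_1$-coefficients deserve the most care, since that is precisely where half-integer contributions can fail to cancel. A route that sidesteps the computation is available from Table 2.1 and Remark \ref{DHtransformRem}: as $\Delta(5,5,5)$ is the index-$3$ subgroup of $\Delta(3,3,5)$ fixed in Section 5.1, and the shift identifies $\widetilde{\Delta(3,3,5)}$ with $\Gamma(\mathcal O,1)=\Gamma^{+}(\bm{B},\mathcal O)$, which for class (VIII) coincides with $\Gamma^{(1)}(\bm{B},\mathcal O)$ by Table 2.1, the inclusion follows at once; on that reading the explicit generator computation serves to exhibit the integral structure rather than to establish the inclusion.
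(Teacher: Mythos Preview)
Your proposal is correct and covers the paper's approach. The paper's own argument is a single line: after displaying the $M_i$-coordinates of $\tilde{hn}_{34},\tilde{hn}_{45},\tilde{hn}_{412}$ (which shows they land in $\bm{B}$), it writes ``So we have, by observing Table 2.1 of Takeuchi'' and states the proposition --- this is exactly the short route you describe at the end, via $\Delta(5,5,5)\subset\Delta(3,3,5)$ and the coincidence $\Gamma^{(1)}(\bm{B},\mathcal O)=\Gamma^{+}(\bm{B},\mathcal O)=(3,3,5)$ for class VIII. Your primary route (explicit base change to the $BG_i$ and checking $\mathcal O_F$-integrality of the coefficients) is not carried out in the paper for these three generators; that bookkeeping appears only in the subsequent proposition, for the $\Delta(3,3,5)$ generators.
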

 Set $\widetilde{\Delta (3,3,5)} $ be the shifted unit group $\Gamma^{+} (\bm{B},\mathcal{O})$
 by
Remark \ref{DHtransformRem}.  
By observing (5.1), 
%%%%%%%%%%%%%
 \begin{prop}
 We have a system of generators of the unit group $\Gamma ^{+} (\bm{B},\mathcal{O})=\widetilde{\Delta (3,3,5)} $:
 \begin{eqnarray}
 \begin{cases}
 \widetilde{hn}_{34}, \cr
  \widetilde{rotn}_{V_c} = \widetilde{(h_{34}h_{Vck}h_{34}^{-1})}, \cr
   \widetilde{rotn}_{V_c'}=\widetilde{(g_{34}h_{34}h_{Vck}h_{34}^{-1}g_{34}^{-1})}.
 \end{cases}
 \end{eqnarray}
 They are represented in terms of the basis of $\mathcal{O}_{\bm{B}}$:
 \begin{eqnarray} \label{eq: 335geneBhyouji}
 \begin{cases}
 \widetilde{hn}_{34} =  \frac{\overline{\omega}}{2} M_1+\frac{\omega}{2} M_x= BG_3,\cr
   \widetilde{rotn}_{V_c} = \frac{1}{2} M_1+(-\frac{1}{2} +\frac{1}{2} \omega)M_x+(-\frac{1}{2} \omega) M_y+(-\frac{1}{2}) M_z
   =(BG_1,BG_2,BG_3,BG_4)\cdot (-\overline{\omega} ,1,1+\overline{\omega},-1) , \cr
   \widetilde{rotn}_{V_c'}= \frac{1}{2} M_1+(-\frac{1}{2} +\frac{1}{2} \omega)M_x+(\frac{1}{2} \omega) M_y+(-\frac{1}{2}) M_z
   = (BG_1,BG_2,BG_3,BG_4)\cdot (-1-\overline{\omega} ,1,1+\overline{\omega},0),
 \end{cases}
 \end{eqnarray}
 where $\cdot$ indicates the usual inner product of real vectors.
 \end{prop}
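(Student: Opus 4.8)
The plan is to separate the statement into its two assertions—that the displayed triple generates the unit group, and that it has the stated coordinates in the order basis—and to reduce the first to the geometric generator system \eqref{eq: 335genesystem} already obtained on $\mathcal{D}$, leaving only a finite matrix computation for the second. First I would recall that the three transformations $h_{34}$, $rot_{V_c}=h_{34}h_{Vck}h_{34}^{-1}$ and $rot_{V_c'}=g_{34}h_{34}h_{Vck}h_{34}^{-1}g_{34}^{-1}$ of \eqref{eq: 335genesystem} are generators of the isotropy subgroups of the vertex $u_3=0$ (order $5$) and of the two barycenters $V_c,V_c'$ (each of order $3$) of a fundamental triangle for $\Delta(3,3,5)$ acting on $\mathcal{D}$. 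Because the isotropy generators attached to the three vertices of a fundamental triangle generate the full triangle group, the triple \eqref{eq: 335genesystem} generates $\Delta(3,3,5)$; this is exactly the conclusion reached in the paragraph preceding the statement, so it may be quoted directly.

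Next I would invoke Remark \ref{DHtransformRem}: the shift $h\mapsto\tilde{h}=M_{hd}\circ h\circ M_{hd}^{-1}$ is conjugation by the fixed element $M_{hd}=M_{mc}M_{mr}$, hence a group isomorphism from $\mathrm{Aut}(\mathcal{D})$ onto $\mathrm{Aut}(\bm{H})$ that carries $\Delta(3,3,5)$ onto $\widetilde{\Delta(3,3,5)}$. By that same remark the latter is identified with the unit group $\Gamma^{+}(\bm{B},\mathcal{O})=\Gamma(\mathcal{O},1)$, which for class VIII equals $\Delta(3,3,5)$ according to Table 2.1. Since an isomorphism sends a generating set to a generating set, the shifted triple $\widetilde{hn}_{34},\widetilde{rotn}_{V_c},\widetilde{rotn}_{V_c'}$ generates $\Gamma^{+}(\bm{B},\mathcal{O})$, which is the first assertion. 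The superscript $n$ records that each representative is rescaled by a scalar of $F$ so as to land on a genuine integral element of $\mathcal{O}_{\bm{B}}$ whose reduced norm is a totally positive unit, the precise membership condition for $\Gamma^{+}(\bm{B},\mathcal{O})$ rather than merely for $\bm{B}^{+}$.

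For the explicit representations \eqref{eq: 335geneBhyouji} I would then run the matrix computation. Starting from the displayed $2\times 2$ forms of $h_{34}$, $h_{Vck}$ and $g_{34}$, I conjugate by $M_{hd}$ to produce the entries of the shifted matrices (several of which are already tabulated just above the statement), expand each in the $F$-basis $M_1,M_x,M_y,M_z$ by solving the resulting linear system over $F=\bm{Q}(\sqrt{5})$, and finally rewrite the answer in the maximal-order basis $BG_1,BG_2,BG_3,BG_4$ by means of the change of basis furnished by the proposition that defines the $BG_i$. The identity $\widetilde{hn}_{34}=BG_3$ is then immediate once one notes $\tfrac12-\tfrac12\omega=\tfrac12\overline{\omega}$, so that $\tfrac{\overline{\omega}}{2}M_1+\tfrac{\omega}{2}M_x$ is literally $BG_3$; the two barycentric rotations require inverting the $4\times 4$ transition matrix between $\{M_i\}$ and $\{BG_i\}$ to read off the coordinate vectors $(-\overline{\omega},1,1+\overline{\omega},-1)$ and $(-1-\overline{\omega},1,1+\overline{\omega},0)$.

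The hard part will not be conceptual but bookkeeping: the entries of $M_{hd}$ and of $h_{Vck}$ carry nested radicals in $\sqrt{5}$ together with the fifth root of unity $\rho_5=e^{2\pi i/5}$, so the conjugations must be simplified to closed form over $F$ before coordinates can be extracted, and an error in a single radical would corrupt the whole coordinate vector. The decisive verification is \emph{integrality}: one must confirm that the two coordinate vectors lie in $\mathcal{O}_F^{\,4}$ and not merely in $F^{4}$, since it is precisely this that certifies membership of the shifted generators in the maximal order $\mathcal{O}_{\bm{B}}$, and hence in $\Gamma^{+}(\bm{B},\mathcal{O})$. Once integrality and the totally-positive-unit reduced norm are checked, the two assertions of the proposition follow together.
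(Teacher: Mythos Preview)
Your proposal is correct and follows essentially the same approach as the paper: the paper simply prefaces the proposition with ``By observing (5.1),'' i.e., it invokes the already-established generator system \eqref{eq: 335genesystem} for $\Delta(3,3,5)$ on $\mathcal{D}$, transfers it to $\bm{H}$ via the conjugation $h\mapsto M_{hd}\,h\,M_{hd}^{-1}$ of Remark \ref{DHtransformRem}, and records the resulting coordinates in the $\{M_i\}$ and $\{BG_i\}$ bases without further commentary. Your write-up is more explicit about the integrality check and the bookkeeping, but the underlying argument is the same.
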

% \begin{rem}
% Takeuchi \cite{Tku2} \UTF{0082}\UTF{00CC}\UTF{0095}\ 2 \UTF{0082}\UTF{00CC}\UTF{008C}\UTF{008B}\UTF{0089}\UTF{00CA}\UTF{0082}\UTF{00C9}\UTF{0082}\UTF{00E6}\UTF{0082}\UTF{00E8}$\Delta (3,3,5)=\Gamma ^{(1)} (\bm{B},\mathcal{O})=\Gamma ^{+} (\bm{B},\mathcal{O})$ 
% \UTF{0082}\UTF{00C5}\UTF{0082}\UTF{00A0}\UTF{0082}\UTF{00E9}\UTF{0082}\UTF{00A9}\UTF{0082}\UTF{00E7}\UTF{0081}A\UTF{008F}\UTF{00E3}\UTF{0082}\UTF{00CC} Prop \UTF{0082}\UTF{00C9}\UTF{0082}\UTF{00E6}\UTF{0082}\UTF{00C1}\UTF{0082}\UTF{00C4}\UTF{0081}A
% \UTF{0092}P\UTF{0090}\UTF{0094}\UTF{008C}Q $\Gamma ^{+} (\bm{B},\mathcal{O})$ \UTF{0082}\UTF{00CC} Maximal order  $\mathcal{O}_{\bm{B}}$ \UTF{0082}\UTF{00D6}\UTF{0082}\UTF{00CC}\UTF{0096}\UTF{0084}\UTF{0082}\UTF{00DF}\UTF{008D}\UTF{009E}\UTF{0082}\UTF{00DD}\UTF{0082}\UTF{00AA}\UTF{008E}\UTF{00C0}\UTF{008C}\UTF{00BB}\UTF{0082}\UTF{00B3}\UTF{0082}\UTF{00EA}\UTF{0082}\UTF{00C4}\UTF{0082}¢\UTF{0082}\UTF{00E9}\UTF{0081}B
% \end{rem}

% From file: Dscm7TraMay30, KmodFm7May30 (Sage)
\subsection{Examples}
\paragraph{Example 5.1} 
For a CM field  $M=\bm{Q} (\sqrt{5}, \sqrt{-7})$ over $F=\bm{Q} (\sqrt{5})$, we have $h(M)=1$, ($h(M)$ is the class number of $M$).
By taking
\[
G_0=(-3+2\omega )BG_1+2\omega BG_2+(4-2\omega )BG_3+(-2\omega )BG_4,
\]
it holds ${\rm Tr} (G_0)=0, {\det} (G_0)=7$. So $G_0^2+7E=0$. By the correspondence $\sqrt{-7}\mapsto G_0$, we realize an embedding of $M$ into $\bm{B}$.
We have unique fixed point of $G_0$ in $\mathcal{D}$:
\[
u_0=(-0.205396\cdots) - (0.0667372 \cdots )i.
\]

By an approximate calculation, we see
$\tilde{\varphi }(u_0)= (8.3782124850378702032551165531909913589\cdots ) i$.
It holds
$\tilde{\varphi }(u_0)^2=-\frac{2527}{36} =-2^{-2}3^{-2}19^2\times 7$.
 Then, 
  $\tilde{\varphi }(u_0)\equiv \sqrt{-7} \ ({\Mod} {\bm{Q}^{\ast}})\in M=C(M)$.
 It means that  $\tilde{\varphi} (u)$ is the modular function that gives the canonical model of 
 $\bm{H}/\Gamma (\mathcal{O}_{\bm{B}},1)$ for (8), and that the normalized 
 modular function $\varphi (u)$ does not bring the canonical model.
%%%%%%%%%%%%%%%%%%%%%%%
\paragraph{Example 5.2} We find  in \cite{HHRWH} that a CM field $M=\bm{Q} \left( \sqrt{-(5+\sqrt{5})}\right)$  has $h(M)=2$.
Setting \[ G_0=(3-3 \omega )BG_1+BG_2+(-4+2 \omega )BG_3+(-1+\omega )BG_4, \]
we obtain
\[ G_0^2+5+\sqrt{5}=0.\] 
So $M=F(G_0)$ in $\bm{B}$. 
We have unique fixed point  of $G_0$ :
\[ u_0=-0.164894 - 0.119803 I\in \mathcal{D}.\]
We have
\[ \varphi (u_0)^2=-165.3749999999584 = -3^3 7^2/2^3 \]
(Note that $\tilde{\varphi} (u)=\sqrt{-3} \varphi (u)$).
So we obtain the Hilbert class field 
$C(M)=M(\sqrt{2})$.
%%%%%%%%%%%%%%%%%%
  \paragraph{Example 5.3} Set  $M=\bm{Q} (\sqrt{-(65-26\sqrt{5})})$. Due to \cite{HHRWH}  and \cite{H-P} $h(M)=2$.
Take
  \[ G_0=(1 - 2 \omega) BG1 + 2 BG2 + (-8 - 2 \omega ) BG3 + (-2 \omega ) BG4).\]
 Then
\[ G_0^2+65-26\sqrt{5}=0.\] 
Hence $G_0$ is a generator of $M$ in $\bm{B}$. We have a fixed point  $u_0$ of $G_0$:
\begin{eqnarray*}
&&
 u_0=-0.2884031937082062430429292960544310724595352385781433875628704276940\cdots
%9638395368966785375630086530972488332044246666798058225359480248492149\
%02650641645122606`118.8236824548723 + 
\\&&
 +(0.2095371854415799547791501532228242020959121464954639149713389790753\cdots ) 
%8008869370751756458946170665104903040408838088079298025806291562633443\
%75848974445230904`118.68494349558134 
i \in \mathcal{D}
\end{eqnarray*}
Hence,
%
%\[ u_0=-0.2884031937082062430429292960544310724595352385781433875628704276940
%9638395368966785375630086530972488332044246666798058225359480248492149\
%02650641645122606`118.8236824548723 + 
% 0.2095371854415799547791501532228242020959121464954639149713389790753\
%8008869370751756458946170665104903040408838088079298025806291562633443\
%75848974445230904`118.68494349558134 I I\in \mathcal{D}]\]
\[ \varphi (u_0)^2=-0.16717727965490681624739779831313980904 \cdots .\]
By the expansion into a continued fraction
\[
-\varphi (u_0)^2
=[0, 5, 1, 53, 1, 1, 3, 4, 1, 12, 7, 74, 2, 2, 41105985538320721741, \cdots]
\]
Hence,
\[
\varphi (u_0)^2=-13\cdot 29^2\cdot 79^2\cdot 2^{-8}\cdot 3^{-13}
\]
As a consequence we have the Hilbert class field $C(M)=M(\sqrt{13})$.

\paragraph{Example 5.4}  
%From file: VoightCMCalFeb4, Tr1D6BListApr15, (4,4,6) \UTF{0091}\UTF{00CE}\UTF{008F}\UTF{00CC}\UTF{008E}\UTF{00AE}\UTF{0095}\\UTF{008E}\UTF{0160}Mar4, 
  The case $M=\bm{Q} (\sqrt{5}, \sqrt{-23})$. 
 It holds $h(M)=3$ (note that $h(\bm{Q} (\sqrt{-23}))=3$ also).
We may choose two different generators of $M$ in $\mathcal{O}_{\bm{B}}$:
\begin{eqnarray*}
&&
\frac{1}{2} (M_1+(-3+\omega) M_x+(1-\omega) M_y-3\omega M_z) 
=-2 BG_1+3\omega BG_2+(4-3\omega) BG_3+(-1-\omega)BG_4
\\&&
\frac{1}{2} (M_1+(1-3\omega) M_x+(1-\omega) M_y-\omega M_z) 
=(4-3\omega) BG_1+\omega BG_2+(-4+\omega) BG_3-BG_4.
\end{eqnarray*}
Let $\tilde{\varphi} _4, \tilde{\varphi} _{11}$ be the values of $\tilde{\varphi} =\Phi (u)-\frac{1}{2}$ 
at the regular fixed point respectively. We have approximate values
%$\phi_4,\phi_{11}$ \UTF{0082}\UTF{00CD} Koike \UTF{0082}\UTF{00CC} $\lambda$ \UTF{0094}\UTF{009F}\UTF{0090}\UTF{0094} $\lambda (u)$ \UTF{0082}\UTF{00CC}\UTF{0083}t\UTF{0081}[\UTF{0083}\UTF{008A}\UTF{0083}G\UTF{0093}W\UTF{008A}J\UTF{0082}\UTF{00A9}\UTF{0082}\UTF{00E7}\UTF{008B}\UTF{00DF}\UTF{008E}\UTF{0097}\UTF{008C}v\UTF{008E}Z\UTF{0082}\UTF{00B3}\UTF{0082}\UTF{00EA}\UTF{0081}A\UTF{0082}\UTF{00BB}\UTF{0082}\UTF{00CC} 60 \UTF{008C}%
\begin{eqnarray*}
&&
\tilde{\varphi }_4=0.41467884460813106945405483718570037432931049417786518217
\\&&
-0.99585830192876518343449922550065726567619225232766421487 i,
\\&&
\tilde{\varphi }_{11}=13.719509519930672493059974467190630795700234699440610387 i.
\end{eqnarray*}
According to our main theorem any of $\tilde{\varphi} _4, \overline{\tilde{\varphi} _4}, \tilde{\varphi} _{11}$ be a generator of
$C(M)$ over $F$.
Because $[C(M):M]=3$,
also any of $\tilde{\varphi} _4^2, \overline{\tilde{\varphi} _4}^2, \tilde{\varphi} _{11}^2$ be a generator of $C(M)$.
We have approximate values
\begin{eqnarray*}
&&
\check{r}_1=-(2^8 \tilde{\varphi }_4^2+2^8\overline{\tilde{\varphi }_4}^2+2^8 \tilde{\varphi }_{11}^2)=\frac{298002375630573376}{6131066257801},
\\&&
\check{r}_2=2^8 \tilde{\varphi }_{11}^2(2^8 \tilde{\varphi }_4^2+2^8\overline{\tilde{\varphi }_4}^2)+2^{16} \tilde{\varphi }_4^2 \overline{\tilde{\varphi }_4}^2
=\frac{27944558699379372032}{1375668606321},
\\&&
\check{r}_3=-2^{24} \tilde{\varphi }_{11}^2 \tilde{\varphi }_4^2 \overline{\tilde{\varphi }_4}^2=\frac{146663661576709210112}{34296447249}.
\end{eqnarray*}
Hence, we obtain a cubic equation for
 $2^8 \tilde{\varphi }_4^2, 2^8\overline{\tilde{\varphi }_4}^2, 2^8 \tilde{\varphi }_{11}$ with rational coefficients:
\begin{eqnarray} \label{eq: phiequation}
&&
t^3+\check{r}_1t^2+\check{r}_2t+\check{r}_3=0,
\end{eqnarray} 
where $t=2^8 \tilde{\varphi }^2$ . 
The above $(\ref{eq: phiequation})$ is a defining equation of the Hilbert class field of
 $M$. Putting
$Y=11^{3}2^{8}5^{4}\tilde{ \varphi}^{-1}$, we have integral defining equation
\begin{eqnarray} \label{eq:j0equation}
&&
Y^3+19268Y^2+12444768657 Y+6131066257801=0.
\end{eqnarray}
%\begin{eqnarray} \label{eq:j0equation}
%&&
%Y^3+\frac{12444768657 Y^2}{6131066257801}+\frac{19268
%   Y}{6131066257801}+\frac{1}{6131066257801} =0
%\end{eqnarray}
Setting $Y^3+r_1 Y^2+r_2Y+r_3=0$ for it, 
\begin{eqnarray*}
&&
r_3=19^{10},
\\&&
r_2=3^2 1382752073,
\\&&
r_1=2^2 4817,
\\&&
\mbox{the\ discriminant} \ {\rm dsc} =-19^{4}61^2 79^2 89^2 109^2 149^2 229^2 \times 23.
\end{eqnarray*}
So we see directly  (\ref{eq:j0equation}) defines a Galois extension over $M=F(\sqrt{-23})$ and non-Galois
over $\bm{Q}$.

\section{Appendix}
The list of arithmetic triangle groups by K. Takeuchi (1977) \cite{Tku1}:
{\center
~
\vskip 3mm
\begin{tabular}{ccccccc} \hline \hline
Class& $(e_1,e_2,e_3)$&$F$& Disc.&\\ \hline
I&$(2,3,\infty),(2,4,\infty),(2,6,\infty),(2,\infty ,\infty),(3,3,\infty),$&$\bm{Q}$&$(1)$ \\
&$(3,\infty ,\infty),(4,4,\infty),(6,6,\infty),(\infty,,\infty,\infty)$&& \\ \hline
II&$(2,4,6),(2,6,6),(3,4,4),(3,6,6)$&$\bm{Q}$&$(2)(3)$ \\ \hline
III&$(2,3,8),(2,4,8),(2,6,8),(2,8,8),(3,3,4)$&$\bm{Q}(\sqrt{2})$&$\frak{p}_2$ \\
&$(3,8,8),(4,4,4),(4,6,6),(4,8,8)$&& \\ \hline
IV&$(2,3,12),(2,6,12),(3,3,6)$&$\bm{Q}(\sqrt{3})$&$\frak{p}_2$ \\
&$(3,4,12),(3,12,12),(6,6,6)$&& \\ \hline
V&$(2,4,12),(2,12,12),(4,4,6),(6,12,12)$&$\bm{Q}(\sqrt{3})$&$\frak{p}_3$ \\ \hline
VI&$(2,4,5),(2,4,10),(2,5,5)$&$\bm{Q}(\sqrt{5})$&$\frak{p}_2$ \\
&$(2,10,10),(4,4,5),(5,10,10)$&& \\ \hline
VII&$(2,5,6),(3,5,5)$&$\bm{Q}(\sqrt{5})$&$\frak{p}_3$ \\ \hline
VIII&$(2,3,10),(2,5,10),(3,3,5),(5,5,5)$&$\bm{Q}(\sqrt{5})$&$\frak{p}_5$ \\ \hline
IX&$(3,4,6)$&$\bm{Q}(\sqrt{6})$&$\frak{p}_2$ \\ \hline
X&$(2,3,7),(2,3,14),(2,4,7),(2,7,7)$&$\bm{Q}(\cos (\pi /7))$&$(1)$ \\
&$(2,7,14),(3,3,7),(7,7,7)$&& \\ \hline
XI&$(2,3,9),(2,3,18),(2,9,18)$&$\bm{Q}(\cos (\pi /9))$&$(1)$ \\
&$(3,3,9),(3,6,18),(9,9,9)$&& \\ \hline
XII&$(2,4,18),(2,18,18),(4,4,9),(9,18,18)$&$\bm{Q}(\cos (\pi /9))$&$\frak{p}_2\frak{p}_3$ \\ \hline
XIII&$(2,3,16),(2,8,16),(3,3,8),(4,16,16),(8,8,8)$&$\bm{Q}(\cos (\pi /8))$&$\frak{p}_2$ \\ \hline

XIV&$(2,5,20),(5,5,10)$&$\bm{Q}(\cos (\pi /10))$&$\frak{p}_2$ \\ \hline

XV&$(2,3,24),(2,12,24),(3,3,12)$&$\bm{Q}(\cos (\pi /12))$&$\frak{p}_2$ \\
&$(3,8,24),(6,24,24),(12,12,12)$&& \\ \hline

XVI&$(2,5,30),(5,5,15)$&$\bm{Q}(\cos (\pi /15))$&$\frak{p}_3$ \\ \hline
XVII&$(2,3,30),(2,15,30),(3,3,15),(3,10,30),(15,15,15)$&$\bm{Q}(\cos (\pi /15))$&$\frak{p}_5$ \\ \hline
XVIII&$(2,5,8),(4,5,5)$&$\bm{Q}(\sqrt{2},\sqrt{5})$&$\frak{p}_2$ \\ \hline
XIX&$(2,3,11)$&$\bm{Q}(\cos (\pi /11))$&$(1)$ \\ \hline
\hline
\end{tabular}

\vskip 5mm
\centerline{Table App.1: Total list of arithmetic triangle groups }
}

\paragraph{[Aknowledgement] } We are grateful to the referee for the deep insight and kind comments for our manuscript. Especially, 
he (or she) carefully checked our main theorem, and he (or she) kindly suggested to exclude the two quadrangle unit groups 
from our consideration.

%%%%%%%%%%%%%%%%%
\noindent
Atsuhira Nagano (Waseda univ.)
e-mail: atsuhira.nagano@gmail.com
\par \vskip 4mm
%%    Information for second author
\noindent
Hironori Shiga (Chiba univ., Corresponding Author)
    e-mail: shiga@math.s.chiba-u.ac.jp

%%    Information for third author
%\author[Sh. Third Author]{L. Third Author\inst{2}}

%%    \thanks will become a 1st page (unnumbered) footnote.
%%    Multiple \thanks are possible. You may use it for the
%%    indication of the corresponding author.
\par \vskip 5mm
This research was supported by 
$\cdot$) Waseda University Grant for Special Research Project (2015B-191),
$\cdot$) The Sumitomo Foundation Grant for Basic Science Research Projects (150108), 
$\cdot$) The JSPS Program for Advancing Strategic International Networks 
to Accelerate the Circulation of Talented Researchers 
``Mathematical Science of Symmetry, Topology and Moduli,
Evolution of International Research Network based on OCAMI''
and
$\cdot$) Grant-in-Aid for Scientific Research (No. 15K04807), Japan Society for the Promotion of Science.

\end{document}